\def\ps@pprintTitle{%
 \let\@oddhead\@empty
 \let\@evenhead\@empty
 \def\@oddfoot{}%
 \let\@evenfoot\@oddfoot}
\newtheorem{theorem}{Theorem}
\newtheorem{lemma}{Lemma}
\newtheorem{definition}{Definition}
\newtheorem{proposition}{Proposition}
\newtheorem{remark}{Remark}
\begin{document}

\begin{frontmatter}



\title{Discretization of Time-Dependent Quantum Systems: 
Real-Time Propagation of The Evolution
Operator}


\author[JJ]{Joseph W. Jerome}
\address[JJ]{Department of Mathematics, Northwestern University,
                Evanston, IL 60208. \\ 
email: jwj@math.northwestern.edu; corresponding author}

\author[EP]{Eric Polizzi}
\address[EP]{Department of Electrical and Computer Engineering,
University of Massachusetts,
               Amherst, MA 01003, \\ 
email: polizzi@ecs.umass.edu.}

\begin{abstract}
We discuss 
time dependent quantum systems on bounded domains. Our work may be viewed as a
framework for several models, including linear iterations involved in 
time dependent density functional theory (TDDFT), the Hartree-Fock model, 
or other quantum models. 
A key aspect of the analysis of the algorithms is the use of 
time-ordered evolution
operators, which allow for both a well-posed problem and its
approximation. The approximation theorems obtained 
for the time-ordered evolution operators 
complement those in the current literature. 
We discuss the available theory at the outset, and proceed to
apply the theory systematically in later sections via approximations
and a global existence theorem for a nonlinear system, 
obtained via a fixed point theorem for the evolution operator.   
Our work is consistent with first-principle real time propagation of 
electronic states, aimed at finding 
the electronic responses of quantum molecular systems and nanostructures. 
We present two full 3D quantum atomistic simulations 
using the finite element method for discretizing the real-space, and the FEAST eigenvalue algorithm 
for solving the evolution operator at each time step. These numerical experiments
are representative of the theoretical results.
\end{abstract}
\begin{keyword}
Time dependent quantum systems, TDDFT, 
time-ordered evolution operators, Hamiltonian, potential functions,
Gauss quadrature


\end{keyword}

\end{frontmatter}

\section{Introduction} 
This article analyzes a general version of time dependent 
quantum mechanical systems via time ordered evolution operators. 
Time-ordered evolution operators arise from direct integration of 
the time-dependent 
Schr\"odinger equation. 
They are most often used to enable real-time propagation of 
ground-state solutions 
in response to any arbitrary external perturbations of the quantum system.
Important physics can be extracted from the time-domain responses. 
The development of efficient numerical techniques which aim at achieving 
both accuracy and performance 
in time-dependent quantum simulations has become important for a 
large number of applications spanning the fields of quantum chemistry, 
solid state physics and spectroscopy. In particular, 
finding a suitable numerical representation for the time-ordered evolution
operator is one of the main focuses of the TDDFT research field \cite{tddft}.

The numerical treatment of time-ordered evolution operators often gives rise to
the matrix exponential, commonly treated using approximations such as 
split-operator techniques \cite{MP}. 
The efficiency of the time-domain propagation techniques described here, 
however, is further enhanced by reliance 
on the capabilities of the new FEAST algorithm for solving the 
eigenvalue problem \cite{POL,feast}. By using  
FEAST, the solution of the eigenvalue problem  is reformulated into 
solving a set of well-defined independent
linear systems along a complex energy contour.  
Obtaining the spectral decomposition of the matrix exponential 
becomes then a suitable alternative to PDE based techniques such as 
Crank-Nicolson schemes \cite{CN}, and can also take advantage of parallelism.

The goals of the paper are as follows.
\begin{itemize}
\item
To provide a rigorous infrastructure, both on 
the ground space and the `smooth' space, 
for the evolution operator
used in topical applications of TDDFT cited in this article (see \cite{Y}
for an early adaptation of Kato's evolution operator);
\item To complement the numerical Gauss quadrature in time introduced in \cite{CP} and 
to provide an exact interface with the use of FEAST; the simulations and
theorems of this article are tightly connected; 
\item To complement
the detailed estimates obtained via the Magnus expansion
\cite{Mag,AF} by an alternative approach based on finite element estimation; in
particular, the Bramble-Hilbert lemma and the Sobolev representation
theorem;
\item
To introduce the numerical evolution operators; in an approximate sense,
this leads to the approximate preservation of significant quantities. 
\item
To obtain, via entirely different methods based on the evolution
operator, pertinent 
existence theorems in the
literature \cite{CLe,Caz}; in some cases, more information can be extracted from
this approach, including local existence for very general nonlinearities. 
In particular, our focus on the nonlinear Schr\"{o}dinger
equation with Hartree potential is consistent with recent studies
\cite{EESY} characterizing this equation as a weak limit of weakly coupled
Fermion systems. Our global analytical methods are not applied to obtain
uniqueness for nonlinear systems, since this is a well-studied topic.
\end{itemize} 
We summarize now the plan of the paper. In the following section, we
outline the mathematical properties developed over the years for
Schr\"{o}dinger operators, as applied to many-particle systems. 
The section includes a discussion of current understanding and practice. 
We also introduce the evolution operator and admissible Hamiltonians.
In the appendix, we include the basic theory of the evolution operator.
This is due to Kato \cite{K1,K2} and Dorroh \cite{D}, and is detailed in
\cite{J1}. The appendix includes the verification that the Hartree
potential satisfies the required hypotheses for inclusion in the class of
admissible Hamiltonians; this leads to invariance of the evolution
operator on the smooth space.
Section three introduces discretization of the evolution
operators, in terms of the
traditional rectangular rule, for short time steps, and in terms of  
`degree of precision' quadrature rules for longer time steps.
Although this resonates with classical theory, the corresponding proofs
of the approximation theorems of the following section are not elementary.
This is followed in section four by precise statements of the principal
theorems and by proofs, which validate the discretizations. 
In addition, a well-posedness result (global existence in time) is given
for the nonlinear Schr\"{o}dinger equation, involving the Hartree
potential coupled to an external potential for a closed system.
Numerical simulations and discussions are presented in section five, 
and future research
is outlined in section six. Finally, our analysis is for the bounded
domain in Euclidean three space, and excludes the use of Strichartz
estimates.  

\section{Time Dependent Quantum Systems} 
Two major theories have been developed to analyze many-particle 
quantum systems.
Classical density functional theory (DFT) is derived
from the Hohenberg-Kohn theorem in \cite{KV}.
By transferring inter-electron effects to the exchange-correlation
potential, expressed as a functional of the electron density $\rho$, 
the theory is capable of representing 
a many-electron system in terms of non-interacting
effective particles. This theory employs pseudo-wave functions 
but a precise representation for the electron charge density.
The aggregate potential is the effective potential 
$V_{\rm eff}$. 
This leads to the Hamiltonian $\hat H$ 
and its associated Kohn-Sham orbitals \cite{KS}.
Well-posedness of the steady problem
has been studied in \cite{PN};
applications in \cite{FW}.
Although DFT is only applicable for obtaining the ground state of 
quantum systems consistent with charge density,  
its time-dependent counterpart, TDDFT, has been proposed in  \cite{RG}
to investigate the dynamics of many-body systems and can be potentially used to obtain energies
of excited states. 
Another major theory
used in the theoretical chemistry community is the
Hartree-Fock model. Here, the emphasis is directed toward the calculation
of exact orbitals (for a mathematical discussion, cf. \cite{CL}).
Aspects of these two theories are covered in the present framework, as
well as other quantum theories.
\subsection{Initial value problem for Schr\"{o}dinger systems}
We follow the notation and format of \cite{CP}. If we denote by $\hat H$
the Hamiltonian operator of the system, then the state $\Psi(t)$ of the
closed quantum system obeys the Schr\"{o}dinger equation,
\begin{equation}
\label{eeq}
i \hbar \frac{\partial \Psi(t)}{\partial t} = \hat H \Psi(t).
\end{equation}
For mathematical well-posedness, an initial condition,
\begin{equation}
\label{ic}
\Psi(0) = \Psi_{0},
\end{equation}
and boundary conditions must be adjoined. 
In the study \cite{RG}, it is shown that the initial value problem is
well-defined physically; there is an inherent invertible mapping 
from the time
dependent external potential function to the time dependent particle density.  
This study is now the basis in the physics community for the reliability
of physics-based studies involving time dependent density functional
theory. An important study using this model is contained in \cite{CMR}.
The model is now characterized as the Runge-Gross model; the potential
in the Hamiltonian
includes: an external potential, which allows for an
ionic component, 
the Hartree potential,
and the exchange-correlation potential.
Except for the global existence result for the Hartree potential, 
the present article is restricted to potentials which are linear in the
quantum state, but the theory has the capacity to extend to 
local (in time) nonlinear versions of the Runge-Gross model. 
We will assume that the particles are confined to a
bounded region $\Omega \subset {\bf {R}}^{d}$, with $d = 1,2,3$, and that
homogeneous Dirichlet boundary conditions hold for the evolving quantum
state. In particular, the spectrum of the Hamiltonian is discrete in this
case. Also, the proofs are unaffected by the interpretation of $\Psi$
as a scalar or vector complex-valued function.
\subsection{Specification of the Hamiltonian operator}
\label{Ham}
Consider a linear problem, i.\thinspace e.\thinspace,
an external potential $V({\bf x}, t)$ which is independent of the system state,
particularly the charge density. This assumption 
is equivalent to studying a non-interacting system. Alternatively, in the
case of an interacting system, it describes 
exactly one iteration of a nonlinear mapping based on potential which
includes the exchange correlation potential, the Hartree potential,
or contributions from other
quantum system models.  
It is natural
therefore to make the following assumption:

{\bf Assumption} The Hamiltonian, 
\begin{equation}
-\frac{\hbar^{2}}{2m} \nabla^{2} + V(\cdotp, t),
\label{eq_pot}
\end{equation}
has, for each $t$, an $L^{2}$ self-adjoint extension $\hat H(t)$.

It follows from a theorem of Stone \cite{S}, \cite[Ch.\ 35, Theorem
1]{Lax} that, for each fixed $t_{\ast}$,  
$(\pm i/\hbar)\hat H(t_{\ast})$ is the infinitesimal generator of a strongly
continuous group, 
$\{\exp[(\pm i/\hbar)\hat H(t_{\ast})t]\}$, 
of unitary operators on $L^{2}$.

The earliest results for the self-adjointness of the Hamiltonian with
interactions including Coulomb potentials are attributed to Kato
\cite{K3,K4}.
Since later results by Kato and other authors \cite{CS} 
imply that these operators
are also stable in the sense we have defined them, it follows that the
framework for evolution operators outlined here covers this case.
Moreover, any further perturbation of such potentials by potentials  
depending (non-linearly) on $C^{1}$ class functions of the quantum state, 
with bounded derivatives, 
is also admissible. This is a
classical commutator result, initially investigated in \cite{Cal}. 
The framework here is thus quite general. 
However, the choice of $Y$ of Theorem \ref{UE} of the Appendix, is strongly dependent on the structure of the
effective potential.
\subsection{The Hartree potential and admissible external potentials}
\label{admissible}
In this section, all statements pertain to Euclidean space ${\bf R}^{3}$.
In order to motivate the format of the Hamiltonian operators for the
linear problem, we first consider the
initial value problem for the {\it nonlinear} Schr\"{o}dinger
equation,
\begin{equation}
\label{eeq2}
i \hbar \frac{\partial \Psi(t)}{\partial t} = \hat H \Psi(t),
\end{equation}
where $$\hat H \Psi =-\frac{\hbar^{2}}{2 m}\nabla^{2}\Psi + V_{\rm ex}\Psi 
+ (W \ast |\Psi|^{2}) \Psi. $$ 
Here, $W({\bf x}) = 1/|{\bf x}|$, and the convolution, 
$$(W \ast |\Psi|^{2})({\bf x}, t) = 
\int_{\Omega}W({\bf x} - {\bf y}) |\Psi({\bf y}, t)|^{2}
\; d{y_{1} dy_{2}dy_{3}},$$ 
represents the Hartree potential, where we
have written 
$|\Psi|^{2} $ for the charge density $\rho$, and $V_{\rm ex} = 
V_{\rm ex}({\bf x},t)$
for the external potential. When spin is accounted for, $\rho$ includes an
additional factor of two. 
In the appendix, we are able to show that, for a choice of Hartree potential
defined by a charge density of minimal regularity, the Hamiltonian family
may be used to construct the evolution operators $\{\hat {U}(t,s)\}$. 

\section{Discretization Schemes}
We begin by introducing a widely used notation in the 
mathematical physics community (e.\thinspace g.\thinspace, see \cite{CMR})
for the evolution operators $\{\hat {U}(t,s)\}$, 
which can be useful if the argument $(t,s)$
is not essential, and emphasis is to be placed upon the family of 
semigroup generators and the semigroups used in the construction of the
evolution operators. 
Formally, then,
the time-ordered evolution operator for (\ref{eeq}) takes the form
\cite{CMR}:
\begin{equation}\label{oper1}
\hat{U}(t,0)=\mathcal{T}\exp\left\{  -\frac{i}{\hbar}\int^t_0d\tau 
\hat{H}(\tau)  \right\},
\end{equation}
and the final solution at time $T$ is then given by: 
\begin{equation}\label{prop}
\Psi(T)=\hat{U}(T,0)\Psi_0.
\end{equation}
This is equivalent to the formula (\ref{SCP}) with $F = 0$.
Notice that $\hat{U}$ is used here for the quantum mechanical interpretation 
of evolution operators.

In addition to the final solution $\Psi(T)$, the evolution of the system 
along $[0,T]$ can be described by intermediate solutions.
From the properties of the time-ordered evolution operator 
(property {\bf II} of Theorem \ref{UE} of the Appendix), one can indeed apply the following decomposition:
\begin{equation}\label{unit}
\hat{U}(T,0)= 
\hat{U}(t_n,t_{n-1})\dots\hat{U}(t_2,t_1)\hat{U}(t_1,t_0),
\end{equation}
where we consider $n-1$ intermediate times with 
$t_0=0$ and $t_n=T$, and where the solution $\Psi(t)$ can be obtained 
at time $t_j$,
$j=1\dots n$. Let us assume a constant time step $\Delta$; 
the corresponding time-ordered evolution operator is designated
\begin{equation}\label{prop1}
\hat{U}(t+\Delta,t)=
\mathcal{T} \exp\left\{-\frac{i}{\hbar}\int^{t+\Delta}_t d\tau \hat{H}(\tau)  
\right\}.
\end{equation}

Let us then outline two possibilities: (i) $\Delta$ is very small in 
comparison to the variation of the potential $V(\cdotp,t)$; and 
(ii) $\Delta$ is much larger.

\subsection{Small time-step intervals: the rectangular rule}

If $\Delta$ is chosen very small such that $\hat{H}(\tau)$ can be considered 
constant within the time interval $[t,t+\Delta]$, it follows that
the argument of the exponential in (\ref{prop1}) needs to be evaluated only 
at time $t$:
\begin{equation}\label{bf1}
\hat{U}_\Delta(t+\Delta,t) \mapsto
\exp\left\{ -\frac{i}{\hbar}\Delta \hat{H}(t)   
\right\},
\end{equation}
which is then equivalent to solving a time independent problem along $\Delta$.
Note that this is equivalent to Definition \ref{recrule}, formulated in
section \ref{A}.
Additionally, we  note that the time-ordered exponential can be replaced 
by the exponential (semigroup)
operator in this case, which is the essence of the
rectangular integration rule. Schematically, we write for the semigroup
product: 
\begin{equation}\label{to}
\mathcal{T}\left\{\prod_j S(t_j)\right\}=S(t_N)\dots S(t_2)S(t_1).
\end{equation}
In section \ref{A}, we show how the rectangular rule globally defines a
family of approximate evolution operators, shown (rigorously) to converge
to the time-ordered family. In this case, the approximation operators must
be defined so that they also possess the time-ordered property.
We have then the following on $[0,T]$: 
\begin{equation}\label{lim1}
\lim_{\Delta\to 0}\hat{U}_\Delta = \hat{U}.
\end{equation}
The approximation order is shown to be $o(\Delta)$ in Theorem
\ref{theoremA}.

\subsection{Long time-step intervals}
In simulations, the use of very small time-step intervals has a 
sound physical interpretation as it corresponds to a step by step 
propagation of the solution over time. The major drawback of this approach, 
however, is that it involves a very large number of time 
steps from initial to final simulation times. 
In contrast, much larger time intervals could become advantageous in 
simulations since the electron density (or other integrated physical quantities)
is likely to exhibit much weaker variations as compared to the variations 
of the individual wave functions.
In addition, at certain frequency, e.g. THz, long-time domain response 
is needed, and accurate calculations using large time steps could be used to 
speed-up the simulation times.
Let us now consider the case of a much longer time interval of length 
$\Delta$,
which may correspond, 
for instance, 
to a given period of a time-dependent perturbation potential 
$V(\cdotp,t)=V_0(\cdotp) \sin(2\pi t/\Delta)$. 
A direct numerical integration of the integral component in the 
time-ordered evolution operator (\ref{prop1}), leads to
\begin{equation}\label{oper111}
\hat{U}_{\delta}(t+\Delta,t)=\mathcal{T}
\exp \left\{  -\frac{i}{\hbar}\xi\sum^p_{j=1}\omega_j \hat {H}(t_j) \right\},
\end{equation}
where $\omega_j$ and $\xi$ are integration weights, 
and $p$ is the number of quadrature points.  
The subscript $\delta$ suggests the local construction of the evolution
operators within the larger subinterval.
\begin{remark}
In the case of a rectangular quadrature rule,
one notes that $\omega_j=1$, $\xi=\delta$, $t_j=t+j*\delta$ and 
$\delta\equiv \Delta/(p+1)$. Here, $j = 0, \dots, p+1$. 
Therefore, it follows from (\ref{lim1}):
$$\lim_{p\to \infty}\hat{U}_\delta = \hat{U}.$$
In particular, if the number of rectangle 
quadrature points $p$ increases significantly,
the problem is then equivalent to solving (\ref{bf1}) multiple times since
\begin{equation}\label{unit2}
\hat{U}_\delta(t+\Delta,t)=\prod_{j=0}^p  \hat{U}_\delta(t_j+\delta,t_j).
\end{equation}
\end{remark} 

Clearly, higher-order quadrature schemes such as Gaussian quadrature 
 can use far fewer points $p$ than a
low-order quadrature rule such as the rectangular rule, 
to yield a high order approximation of the integral of a function.
A $p$-point Gaussian quadrature rule is a numerical integration constructed 
to yield
an exact result for polynomials of degree $2p-1$ by a suitable
choice of the points $t_i$ and Gauss-Legendre weights $\omega_j$ 
\cite[Sec. 5.5]{Sauer}.
We associate the quadrature points $t_j$ at the Gauss node $x_j$
using $t_j=\frac{\Delta}{2}x_j+\frac{2t_{0}+\Delta}{2}$; 
also we note $\xi=\Delta/2$. Thus, the following is a reasonable
conjecture:

$$\forall \epsilon, \quad \exists p_0 \mbox{ such that } 
\forall p\geq p_0, \quad \|\hat{U}_{\delta}-\hat{U}\|\leq\epsilon.$$
Here, $\delta$ represents an average spacing between 
quadrature nodes: $\delta\simeq \Delta/(p+1)$. 
In section \ref{B}, we show that this estimate is rigorously correct for the
weighted sum of the Hamiltonians (cf. Theorem \ref{theoremB}).

\subsection{Evaluation of the approximate evolution operator}
In  order to evaluate the time-ordered evolution operator, 
it is necessary to decompose the exponential in (\ref{oper111})
into a product of exponential operators taken at different time steps: 
 \begin{equation}\label{oper1111}
\hat{U}_{\delta}(t+\Delta,t)=
\mathcal{T}\left\{\prod_{j=1}^{p}\exp\left\{-\frac{i}{\hbar}\xi\omega_j 
\hat {H}(t_j)\right\} \right\}+O[\delta],
\end{equation}
which expresses an anti-commutation error $O[\delta]$ between 
Hamiltonian operators evaluated at different times $t_j$.
The validity of this approximation is discussed in section \ref{C}.

We note from equations (\ref{oper111}) and (\ref{oper1111}) that
 two numerical errors are respectively involved:
 (i) a quadrature error resulting from the discretization of the integral 
and (ii) an anti-commutation error resulting from 
 the decomposition of the exponential operators.

\section{Principal Theorems}
This section is devoted to theorems \ref{theoremA}, \ref{theoremB}, and
\ref{global.existence}.
\subsection{Convergence of the rectangular approximation}
\label{A}
We present a general result, not restricted to the quantum application.
\begin{definition}
\label{recrule}
Given $\{A(t)\}$ as in Definition \ref{def3.1}, define
$$
A_{n}(t) = A(T[nt/T]/n), \; 0 \leq t \leq T.
$$
Here, $[s]$ denotes the greatest integer less than or equal to $s$.
If $s \leq t$, and $s,t \in [t_{j-1}, t_{j}]$, and $A_{n} \equiv A$ on
this interval, then
$$
U_{n}(t,s) = e^{-(t-s)A}.
$$
For other values of $s,t$, $U_{n}(t,s)$ is uniquely determined by the
condition 
$$
U_{n}(t,r) = U_{n}(t,s)U_{n}(s,r).
$$
\end{definition} 

We make the following observations. 
\begin{itemize}
\item
Convergence of generator approximations as $n \rightarrow \infty$:
$$
\|A(t) - A_{n}(t)\|_{Y,X} \rightarrow 0, \; \mbox{uniformly}, t \in [0,T].
$$
\item
Invariance and uniform boundedness of evolution operators on $Y$:
$$
U_{n}(t,s) Y \subset Y, \; \|U_{n}(t,s)\|_{Y} \leq C(T), \; \forall t,s,n.
$$
\item
Differentiation:
$$
(d/dt) U_{n}(t,s)g = -A_{n}(t)U_{n}(t,s) g, \; g \in Y, \; \mbox{for} \;
t \not=\frac{jT}{n}.
$$
\end{itemize}
\begin{theorem}
\label{theoremA}
The rectangular rule with $\Delta  = T/n$ is globally convergent:
for $t,r \in [0,T]$, $r<t$, 
$$
\|U(t,r)g - U_{n}(t,r)g\|_{X} \leq C \|g\|_{Y} \;(t-r)  
\sup_{s \in [0,T]} \|A(s) - A_{n}(s)\|_{Y,X}.
$$ 
If $t,r \in [t_{j-1}, t_{j}]$, this global estimate 
implies the rate of convergence of order
$o(\Delta )$.
\end{theorem}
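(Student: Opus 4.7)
The plan is to carry out a classical Duhamel-style perturbation argument that directly compares the two evolution operators through their generators. Specifically, I would introduce the auxiliary map $h(s) = U(t,s)\,U_n(s,r)\,g$ for $s \in [r,t]$ and observe that $h(t) - h(r) = U_n(t,r)g - U(t,r)g$, so that the whole estimate reduces to bounding $\int_r^t h'(s)\,ds$ in the $X$-norm.

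To compute $h'(s)$, I would combine the backward-variable differentiation formula $\partial_s U(t,s)=U(t,s)A(s)$ valid on $Y$ for the Kato evolution family of Theorem \ref{UE}, with the forward-variable formula $\partial_s U_n(s,r)g = -A_n(s)U_n(s,r)g$ recorded in the third observation preceding the theorem. The product rule then gives, formally,
\[
h'(s) = U(t,s)\bigl[A(s) - A_n(s)\bigr]\,U_n(s,r)\,g,
\]
so that, modulo the technical point discussed below,
\[
U_n(t,r)g - U(t,r)g = \int_r^t U(t,s)\bigl[A(s) - A_n(s)\bigr]U_n(s,r)g\,ds
\]
as a Bochner integral in $X$.

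To finish, I would estimate the $X$-norm of the integrand pointwise. The uniform boundedness of $U(t,s)$ on $X$ from Theorem \ref{UE} supplies one constant; the invariance observation $U_n(s,r)Y\subset Y$ with $\|U_n(s,r)g\|_Y\leq C(T)\|g\|_Y$ keeps the argument inside the regular space $Y$; and the middle factor is controlled by $\|A(s)-A_n(s)\|_{Y,X}$. Pulling the supremum of this last quantity out of the integral yields a bound of the form $C\,\|g\|_Y\,(t-r)\,\sup_s\|A(s)-A_n(s)\|_{Y,X}$, as claimed. When $t,r$ lie in a single subinterval $[t_{j-1},t_j]$, one has $t-r\leq\Delta$, while the supremum factor tends to zero as $n\to\infty$ by the uniform convergence observation, so the product is $o(\Delta)$.

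The main obstacle I anticipate is justifying the integral identity across the breakpoints $jT/n$, where $A_n$ has jumps and the map $s\mapsto U_n(s,r)g$ is only piecewise classically differentiable. The clean way to handle this is to work on each open subinterval $(t_{k-1},t_k)$ separately, where $A_n$ is constant and the differentiation formula for $U_n$ holds, telescope the resulting finite sum over the subintervals meeting $[r,t]$, and use strong $X$-continuity of $U$ and $U_n$ at the breakpoints (together with the fact that $U_n$ leaves $Y$ invariant) to reassemble the pieces into a single Bochner integral on $[r,t]$. Once this gluing step is in place, the remainder of the argument is a routine norm estimate.
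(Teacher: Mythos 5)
Your proposal is correct and follows essentially the same route as the paper: the authors likewise differentiate $U(t,s)U_n(s,r)g$ in $s$, integrate to obtain the Duhamel identity $U(t,r)g - U_n(t,r)g = -\int_r^t U(t,s)[A(s)-A_n(s)]U_n(s,r)g\,ds$, and conclude from the uniform convergence of the generator approximations together with the invariance and boundedness of $U_n$ on $Y$. Your additional care in gluing the identity across the breakpoints $jT/n$ is a point the paper passes over silently, but it is the same argument.
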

{\bf Proof}: Consider the identity:
\begin{equation}
U(t,r)g -U_{n}(t,r)g = -\int_{r}^{t} U(t,s)[A(s) -
A_{n}(s)]U_{n}(s,r)g \; ds,
\label{identity}
\end{equation}
which follows from the differentiation of $-U(t,s)U_{n}(s,r)g$ with respect
to $s$, followed by its integration, after the conclusions of Theorem
\ref{UE} and the above observations have been introduced. 
The estimate is now immediate from the uniform convergence of the
generator sequence.
$\Box$
\subsection{Optimal or High Precision Quadrature} 
\label{B}
Although high-precision quadrature is much used (see \cite{LHP} for a
Crank-Nicolson evolution operator approximation),
its analysis via approximation theory, including the Bramble-Hilbert lemma
and the Sobolev representation theorem, appears minimal. 
The much older classical theory is described in \cite{SB}.
To fix the ideas, we consider the
method locally, as used on a subinterval originally defined via the
rectangular rule. The analysis is not restricted to Gaussian quadrature.
\begin{definition}
\label{Bdef}
The structure of the Hamiltonian here is assumed of the form written in
equation (\ref{eq_pot}), and $V$ has the meaning of a potential.
We introduce constants $c_{j}$, associated with $p$ interior points 
$t_{j}$ of an
interval $I$ of length $\Delta$, such that $\sum_{j=1}^{p} c_{j} f(t_{j})
\Delta$ is a quadrature approximation for $\int_{I} f(t) \; dt$. 
On the interval $[t_{0}, t_{0}+\Delta]$, define, for $s \leq t$, 
 \begin{equation}\label{global}
{\hat{U}}_{p}(t,s)=
\mathcal{T}\exp\left\{-(t-s)\sum_{j=1}^{p}\frac{i}{\hbar}c_j 
\hat {H}(t_j)\right\}.
\end{equation}
\end{definition}
We require the constants $c_{j}$
of the rule to reproduce the spatial part of the
operator. There are two parts of the error as seen from approximation
theory. There is that determined from the approximate evolution operators,
as induced by the quadrature. This is estimated in the following theorem.
However, there is also the initial error: that inherited by the quality of
the approximation of the solution at the beginning of the local time
interval. This is not an input directly controlled.

\begin{theorem}
\label{theoremB}
Suppose that ${\hat U}$ is invariant on the smooth Sobolev space: 
${\mathcal H} =  H^{4p}(\Omega)\cap H^{1}_{0}(\Omega)$,
and $V$ is smooth: $V \in C^{\infty}(I \times {\bar \Omega})$. 
If the quadrature scheme of Definition \ref{Bdef} 
has precision $2p-1$, 
then the evolution
operators constructed by the approximation scheme satisfy the estimate 
in $B[{\mathcal H}, L^{2}]$:
for any $g$ of norm one in ${\mathcal H}$,
$$
\|{\hat U}(t_{0} + \Delta,t_{0})g - {\hat U}_{p}(t_{0} + \Delta,t_{0})g\|_{X} 
\leq C(p, V)  \Delta^{2p}. 
$$ 
Here, $C(p,V)$ is proportional to a reciprocal Taylor factorial in $2p$; 
the supremum (over $\Omega$) of the $H^{2p}(I)$ norm of $V$ is the
dominant $V$-contribution.
\end{theorem}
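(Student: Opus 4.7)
The plan is to mirror the Duhamel-type identity used for Theorem~\ref{theoremA}, upgrading it with the Bramble-Hilbert / Sobolev-representation machinery highlighted in section~\ref{B} to cash in the full precision of the quadrature. Writing $A(s) = (i/\hbar)\hat H(s)$ and $A_p = (i/\hbar)\sum_j c_j \hat H(t_j)$, I begin from
\begin{equation*}
\hat U(t_0+\Delta, t_0) g - \hat U_p(t_0+\Delta, t_0) g = -\int_{t_0}^{t_0+\Delta} \hat U(t_0+\Delta, s)\,[A(s) - A_p]\,\hat U_p(s, t_0) g\, ds,
\end{equation*}
obtained by differentiating $s \mapsto \hat U(t_0+\Delta, s)\hat U_p(s, t_0)g$ and integrating, using that $A_p$ is time-independent (so $\hat U_p(\cdot, t_0)g$ is smoothly differentiable on $\mathcal H$) and that $\hat U(t_0+\Delta, \cdot)$ is differentiable in the second variable on the smooth space.

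The first reduction uses the requirement that the weights reproduce the spatial part of the operator, which forces $\sum_j c_j = 1$; the Laplacian contributions then cancel in $A(s) - A_p$, and what remains is multiplication by the scalar quadrature residual
\begin{equation*}
\phi(x, s) := \frac{i}{\hbar}\Bigl[V(x, s) - \sum_{j=1}^{p} c_j V(x, t_j)\Bigr].
\end{equation*}
For each fixed $x \in \bar\Omega$, the function $\phi(x, \cdot)$ is precisely the pointwise quadrature discrepancy of $V(x, \cdot)$, and by the precision $2p-1$ its $s$-integral against any polynomial in $s$ of degree at most $2p-1$ is controlled, via a Peano-kernel representation, by $\Delta^{2p}$ times $\sup_{x\in\bar\Omega}\|\partial_s^{2p} V(x, \cdot)\|_{L^{\infty}(I)}$, with a reciprocal Taylor factorial in $2p$ in the constant.

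The second step is to Taylor expand the operator-valued map $s \mapsto G(s) := \hat U(t_0+\Delta, s)\,\hat U_p(s, t_0) g$ about an interior node of $I$ to order $2p-1$, producing an operator polynomial in $s$ applied to $g$ plus an $L^2$-valued remainder of order $\Delta^{2p}$. Each time derivative of $G$ introduces one application of $A(\cdot)$ or $A_p$, so $2p$ derivatives consume $4p$ spatial derivatives; the hypothesis $g \in \mathcal H = H^{4p}(\Omega) \cap H^{1}_{0}(\Omega)$, together with the assumed invariance of both $\hat U$ and $\hat U_p$ on $\mathcal H$, is precisely what closes this expansion in $L^2$. Substituting the polynomial into the Duhamel identity and applying the $\phi$-against-polynomial estimate yields an $O(\Delta^{2p})$ contribution from the polynomial part; the remainder contribution is bounded by $\sup_{x,s}|\phi(x,s)|$ (which is $O(\Delta)$) times the $L^2$-remainder of order $\Delta^{2p-1}$, also $O(\Delta^{2p})$ after a final integration of length $\Delta$. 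Assembling these pieces gives $\|\hat U(t_0+\Delta, t_0) g - \hat U_p(t_0+\Delta, t_0) g\|_{X} \le C(p, V)\,\Delta^{2p}$ with constant of the form predicted by the theorem.

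The principal obstacle, I expect, will be the bookkeeping: the scalar residual $\phi(\cdot, s)$ does not commute with $\hat U(t_0+\Delta, s)$, so one must truly expand the operator-valued factor $G(s)$ as a genuine Taylor series in $s$ (with coefficients built from iterated applications of $A$ and $A_p$ to $g$), rather than pulling scalars out of the integral. A second delicate point is making the dependence of the Bramble-Hilbert constant on $p$ explicit enough to recover the reciprocal Taylor factorial advertised in the statement; this is where the Sobolev representation theorem, rather than its elementary variant, becomes essential.
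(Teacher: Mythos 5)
Your opening moves coincide with the paper's: the Duhamel identity (\ref{identity}) restricted to the interval, and the reduction of $A(s)-A_{p}$ to a potential discrepancy via the reproduction of the spatial part of the operator. The gap lies in how you cash in the quadrature precision. You propose to treat the integrand of (\ref{rep}) as a scalar residual $\phi(x,s)$ times an operator factor $G(s)={\hat U}(t_{0}+\Delta,s){\hat U}_{p}(s,t_{0})g$, Taylor-expand $G$, and pair moments of $\phi$ against the polynomial. Two things break. First, the integrand is ${\hat U}(t_{0}+\Delta,s)\,M_{\phi(\cdot,s)}\,{\hat U}_{p}(s,t_{0})g$ with the multiplication operator sandwiched between the propagators; it is not $\phi(\cdot,s)G(s)$, and expanding $G$ (your stated cure for the non-commutativity you flag) does not yield an expression into which a ``$\phi$ against polynomials'' bound can be substituted --- you would have to expand the two propagators separately and control a double sum of sandwiched terms. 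Second, and more seriously, the central estimate you invoke is false. Precision $2p-1$ controls only the zeroth moment $\int_{I}\phi(x,s)\,ds$, which is the genuine quadrature error of $V(x,\cdot)$. For $m\geq 1$ the quantity $\int_{I}\phi(x,s)(s-s_{0})^{m}\,ds$ equals the quadrature error of $V(x,\cdot)(\cdot-s_{0})^{m}$ plus $\Delta\sum_{j}c_{j}V(x,t_{j})\left[(t_{j}-s_{0})^{m}-\frac{1}{\Delta}\int_{I}(s-s_{0})^{m}\,ds\right]$, and the bracket is generically of size $\Delta^{m}$, so the moment is only $O(\Delta^{m+2})$, far short of $O(\Delta^{2p})$ for small $m$. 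A clean counterexample: take $V(x,s)=s$, so $\phi(x,s)=\frac{i}{\hbar}(s-\bar{t})$ with $\bar{t}=\sum_{j}c_{j}t_{j}$ the midpoint for a symmetric rule; then $\int_{I}\phi(x,s)(s-\bar{t})\,ds=\frac{i}{\hbar}\Delta^{3}/12\neq 0$ although $\partial_{s}^{2p}V\equiv 0$. The frozen node values $V(x,t_{j})$ inside $\phi$ are not accompanied by node values of the polynomial, so these moments are not quadrature errors of anything.

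The paper's proof avoids both problems by a different splitting: it adds and subtracts the estimator ${\mathcal Q}(\cdot)=\sum_{j}\frac{i}{\hbar}c_{j}{\hat U}(t_{0}+\Delta,t_{j})V(\cdot,t_{j}){\hat U}_{p}(t_{j},t_{0})g$ inside (\ref{rep}), so that each term of (\ref{twoterms}) has the form ``integral over $I$ minus weighted node sum'' of a single smooth $L^{2}$-valued map of $s$ in which the node evaluations carry the propagators along with the potential. To such a linear functional, which annihilates polynomials of degree $\leq 2p-1$ by the precision hypothesis, the Bramble--Hilbert lemma applies, and the Sobolev representation theorem supplies the Peano-kernel form with the reciprocal factorial; the $2p$ time derivatives of the integrand consume $4p$ spatial derivatives of $g$, which is where ${\mathcal H}=H^{4p}$ enters --- your accounting of that last point is the one piece of your argument that transfers intact. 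To salvage your factorized picture you would either have to keep the full integrand in the node sum, as the paper does, or establish term-by-term smallness of the products (moment of $\phi$) times (Taylor coefficient of the operator factor), which requires showing that the low-order $s$-derivatives of that factor are themselves $O(\Delta)$ --- a fact you neither state nor use.
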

{\bf Proof}:
We begin with (\ref{identity}), 
with a re-interpretation of 
$A(s) - A_{n}(s)$ as a difference of potentials:
$$
A(s) - A_{n}(s) \mapsto \frac{i}{\hbar} [V(\cdotp, s) - V_{p}(\cdotp)], 
$$
where $V_{p}$ is defined by $V_{p} = \sum_{j} c_{j} V(\cdotp, t_{j})$. 
We emphasize that the sum defining $V_{p}$ is to be taken as {\it time
ordered}.
We have used the reproduction of the spatial part of the operator by the
quadrature scheme in writing this reduction.
Thus, we have from (\ref{identity}), with $r \mapsto t_{0}, t \mapsto t_{0} +
\Delta$:
\begin{equation*}
{\hat U}(t_{0} + \Delta,t_{0})g -{\hat U}_{p}(t_{0} + \Delta,t_{0})g =
\end{equation*}
\begin{equation}
 -\frac{i}{\hbar}
\int_{t_{0}}^{t_{0} + \Delta} {\hat U}(t_{0} + \Delta,s)[V(\cdotp,s) -
V_{p}(\cdotp)]{\hat U}_{p}(s,t_{0})g \; ds.
\label{rep}
\end{equation}
We add and subtract the following 
quadrature  estimator function within the integrand of
(\ref{rep}): 
$${\mathcal Q}(\cdotp) = \sum_{j=1}^{p}\frac{i}{\hbar}c_j 
{\hat U}(t_0 + \Delta, t_{j})V(\cdotp, t_{j}))
{\hat U}_{p}(t_{j},t_{0})g.$$ 
This gives two terms, equivalent to quadrature estimation for two distinct
functions: 
\begin{equation*}
{\hat U}(t_{0} + \Delta,t_{0})g -{\hat U}_{p}(t_{0} + \Delta,t_{0})g =
\end{equation*}
\begin{equation*}
 -\frac{i}{\hbar}
\int_{t_{0}}^{t_{0} + \Delta} [{\hat U}(t_{0} + \Delta,s)V(\cdotp,s) 
{\hat U}_{p}(s,t_{0})g - {\mathcal Q}(\cdotp)] \; ds
\end{equation*}
\begin{equation}
 +\frac{i}{\hbar}
\int_{t_{0}}^{t_{0} + \Delta}[{\hat U}(t_{0} + \Delta, s) 
V_{p}(\cdotp){\hat U}_{p}(s,t_{0})g - {\mathcal Q}(\cdotp)] \; ds.
\label{twoterms}
\end{equation}
It remains to estimate the linear functionals summed above in
(\ref{twoterms}), and defined by
the difference of integration and quadrature evaluation in each case. 
Although the hypotheses of the Bramble-Hilbert Lemma \cite[Theorem 2]{BH} are
directly satisfied, the conclusion is not sufficient: 
this implies an order $O(\Delta^{2p})$ approximation multiplied by a time
integrated expression, involving the $2p$-th derivative of $V$.
To obtain a more precise error estimate, also involving the factorial, and
required here, we (additionally)
apply the Sobolev representation theorem (see \cite[Prop.
4.1.1]{J1}). This provides the full, triple product estimate, which
includes the (Taylor) factorial. 
Since this estimate is maintained with respect to integration over
$\Omega$, the proof is concluded.
$\Box$.
\subsubsection{Evaluation of the quadrature  rule approximation}
\label{C}
In practice, equation (\ref{oper1111}) offers an attractive numerical 
alternative to the original Magnus expansion \cite{Mag} when applied to large systems. 
The product of exponentials does not require the manipulation of commutators, and it 
can also be addressed very efficiently using our FEAST spectral approach (more details will follow 
in the simulation section).
Note that
the iteration of the semigroup exponentials in equation 
(\ref{oper1111}) represents a slight weighted version 
extension of the rectangular rule to unequally spaced nodes. 
One can adapt the proof of Theorem \ref{theoremA} to this case to obtain a
convergence order of $O(\delta)$. However, it does not seem possible to
improve this estimate to $o(\delta)$ as is possible in the case of the
rectangular rule. Note that (\ref{identity}) involves the difference
between the generator and the approximate generator; in the case of the
rectangular rule, this approximation converges uniformly in norm over the
$t$-interval. This does not appear to be the case for the exponential
product, where 
one cannot assert the local convergence of the generator approximation.
However, the program carried out in \cite{AF}, explicitly up to order
eight, proposes an interesting improvement:  
the definition of a `nearby' discrete problem, so that the so-called
commutator-free product exponential rule discussed here can be applied via
adjusted weights to improve convergence. It appears to be an open problem
as to the actual computational complexity associated with such improved
estimates. We note that, 
in the simulations of the
following sections (see Figure 1), one uses very high-order Gauss-Legendre
rules. Remarkably, one sees a very close relation between the
predictions of Theorem \ref{theoremB} and the actual numerical convergence.  

\subsection{Global in-time solution for admissible Hamiltonians}
We show in this section that a solution for the initial-value problem for
the nonlinear Schr\"{o}dinger equation exists for the admissible
Hamiltonians we have introduced in section \ref{admissible}. 
In addition to the regularity assumed for $V_{\rm ex}$ previously, we also
require here the existence and boundedness of its time derivative.
The exchange-correlation potential is not
included in this formulation.
We retain the meaning of $X,Y$ in this section, previously established in 
section \ref{admissible}.
\begin{definition}
For $J = [0, T], T$ arbitrary, 
define $K: C(J; X) \mapsto
C(J; X)$ by
$$
K \phi (\cdotp, t) = U^{\phi}(t, 0) \Psi_{0}, 
$$
where $U^{\phi}$ has been defined in Proposition \ref{generate} of the
appendix, and corresponds to the Hartree potential $W \ast |\phi|^{2}$.. 
\end{definition}
\begin{remark}
\label{U}
We will have need of estimates of 
$\|U^{\phi}(t,s)\|_{X}$ and $\|U^{\phi}(t,s)\|_{Y}$. 
On the $L^{2}$ space $X$, the operators preserve norm. 
On $Y$, the operators $U^{\phi}(t, s)$ have norm which is bounded from
above by a constant $C$ with dependency, 
$C(T, \|S\|_{Y,X}, \|S^{-1}\|_{X,Y}, \|\phi\|_{C(J; X)})$ (see
\cite[Cor.\ 6.3.6]{J1}).
\end{remark}
\begin{lemma}
The mapping $K$ is a compact and continuous mapping of
$Z = C(J; X)$ into itself. 
\end{lemma}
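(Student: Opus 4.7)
The plan is to treat the two assertions with a common tool: the Duhamel-type identity (\ref{identity}) from the proof of Theorem \ref{theoremA}, applied to the family of evolution operators $U^\phi$ associated with the Hartree-perturbed generators $A^\phi(t) = (i/\hbar)[-(\hbar^{2}/(2m))\nabla^{2} + V_{\rm ex}(\cdotp,t) + (W*|\phi|^{2})(\cdotp,t)]$. That $K\phi \in Z$ for each fixed $\phi \in Z$ is immediate from property \textbf{II} of Theorem \ref{UE}, since $\Psi_{0} \in Y$ guarantees that $t \mapsto U^\phi(t,0)\Psi_{0}$ is an $X$-continuous curve lying in $Y$.

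For continuity of $K$, I would take a sequence $\phi_{n} \to \phi$ in $Z$ and insert the two Hartree-perturbed generators $A^{\phi}, A^{\phi_{n}}$ into (\ref{identity}) with $g = \Psi_{0}$ and $r = 0$:
$$K\phi(t) - K\phi_{n}(t) = -\int_{0}^{t} U^{\phi}(t,s)\bigl[A^{\phi}(s) - A^{\phi_{n}}(s)\bigr]U^{\phi_{n}}(s,0)\Psi_{0}\, ds.$$
The difference of generators is pure multiplication by $W*(|\phi|^{2} - |\phi_{n}|^{2})$. Factoring $|\phi|^{2} - |\phi_{n}|^{2} = \phi\,\overline{(\phi - \phi_{n})} + \overline{\phi_{n}}(\phi - \phi_{n})$, and invoking the standard convolution and Sobolev estimates for the Coulomb kernel $W(x) = 1/|x|$ on the bounded domain $\Omega$ (of the type already developed in the appendix to verify admissibility of the Hartree potential), one bounds this multiplication operator from $Y$ to $X$ by $C(\|\phi\|_{Z} + \|\phi_{n}\|_{Z})\|\phi - \phi_{n}\|_{Z}$. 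Remark \ref{U} keeps $U^{\phi_{n}}(s,0)\Psi_{0}$ inside a fixed bounded ball of $Y$ uniformly in $s$ and $n$, and $U^{\phi}(t,s)$ is an $X$-isometry, so integration over $[0,T]$ produces $\|K\phi - K\phi_{n}\|_{Z} \leq C(T,\|\phi\|_{Z})\|\phi - \phi_{n}\|_{Z}$.

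For compactness, fix a bounded ball $B \subset Z$ and apply Arzel\`a-Ascoli on $C(J;X)$. Pointwise-in-$t$ relative compactness of $\{K\phi(t) : \phi \in B\}$ in $X$ follows from the uniform $Y$-bound of Remark \ref{U} combined with the Rellich-Kondrachov compact embedding $Y \hookrightarrow\!\hookrightarrow X$ on the bounded three-dimensional domain $\Omega$. Uniform equicontinuity in $t$ comes from writing
$$K\phi(t) - K\phi(s) = \bigl[U^{\phi}(t,s) - I\bigr]\,U^{\phi}(s,0)\Psi_{0}$$
and using $\|[U^{\phi}(t,s) - I]g\|_{X} \leq (t-s)\sup_{\tau \in J}\|A^{\phi}(\tau)\|_{Y,X}\|g\|_{Y}$, where the supremum is uniform over $\phi \in B$ because $A^{\phi}$ depends on $\phi$ only through its Hartree contribution, itself controlled by $\|\phi\|_{Z}^{2}$.

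The hard part will be the continuity step, or rather the joint control inside the integrand: one simultaneously needs the Hartree potential difference to vanish in an operator norm from $Y$ into $X$ and the factor $U^{\phi_{n}}(s,0)\Psi_{0}$ to stay trapped in one fixed bounded set of $Y$ along the whole interval $[0,T]$. The former reduces to the Coulomb-kernel estimates developed in the appendix; the latter rests on the continuous dependence of the $Y$-invariance constant on $\|\phi_{n}\|_{Z}$ already recorded in Remark \ref{U}.
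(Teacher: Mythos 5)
Your proposal is correct and follows essentially the same route as the paper's own proof: the Duhamel-type identity (\ref{identity}) specialized to $U^{\phi}$ and $U^{\psi}$, the factorization $|\phi|^{2}-|\psi|^{2}$ controlled by the Coulomb-kernel estimate (\ref{Hartineq}), the uniform $Y$-bounds of Remark \ref{U}, and the compact embedding $Y\hookrightarrow X$ combined with the (generalized) Ascoli theorem. The one point where you go beyond the paper is the explicit $t$-equicontinuity bound $K\phi(t)-K\phi(s)=[U^{\phi}(t,s)-I]\,U^{\phi}(s,0)\Psi_{0}$ with $\|[U^{\phi}(t,s)-I]g\|_{X}\leq (t-s)\sup_{\tau}\|A^{\phi}(\tau)\|_{Y,X}\|g\|_{Y}$, which is precisely the hypothesis Ascoli requires; the paper is terser there, deducing it from the uniform continuity of $\phi\mapsto K\phi$ on balls, so your version is a welcome clarification rather than a different method.
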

\begin{proof}
We prove the following, which is essential to both parts of the proof.
\begin{itemize}
\item
The image under $K$ of any ball ${\cal B} \subset Z$  
is an equicontinuous family: 
the distance 
$\|K \phi - K \psi \|_{Z} < \epsilon$ if  
$\|\phi - \psi \|_{Z} < \delta$.
Here, $\delta$ is independent of $\phi$, and $\psi$, provided 
$\phi$ and $\psi$ lie in a fixed ball of $Z$.
This is implied by: 
\begin{equation}
\label{identity2}
\|K \phi - K \psi \|_{Z} \leq C_{\psi} T 
\sup_{0 \leq s \leq T} \|A^{\phi}(s) - A^{\psi}(s)\|_{Y,X} \|\Psi_{0}\|_{Y}.
\end{equation}
\end{itemize}
Here, $C = C_{\psi}$ is the constant of Remark \ref{U}.
We use a variant of (\ref{identity}) to prove (\ref{identity2}): 
$$
U^{\phi}\Psi_{0}(t) - U^{\psi}\Psi_{0}(t)  
= -\int_{0}^{t} U^{\phi}(t,s)[A^{\phi}(s) -
A^{\psi}(s)]U^{\psi}(s,0)\Psi_{0} \; ds.
$$
A direct estimate implies (\ref{identity2}). We now show that this yields
the asserted equicontinuity. In particular, we must estimate 
$\sup_{0 \leq s \leq T} \|A^{\phi}(s) - A^{\psi}(s)\|_{Y,X}$, where the
operators $A^{\phi}, A^{\psi}$ are defined in Proposition \ref{generate},
via the Hamiltonians defined there. Clearly, the essential term is:
$$\|(W \ast |\phi|^{2})g - (W \ast |\psi|^{2})g\|_{X},$$ 
which is estimated by an adaptation of inequality (\ref{Hartineq}).
Specifically, we have:
\begin{equation}
\label{Hartineq2}
\|(W \ast |\phi|^{2})g - (W \ast |\psi|^{2})g\|_{X}| 
\leq \|W\|_{X} \||\phi|^{2} - |\psi|^{2} \|_{L^{1}}
\|g\|_{X},
\end{equation}
which, after factorization, 
is finally estimated via $\|\phi - \psi\|_{X}$, provided 
$\phi$ and $\psi$ lie in a fixed ball of $Z$.
We now prove the continuity and compactness.
\begin{itemize}
\item
The continuity of $K$ on $Z$.
\end{itemize}
This follows from the equicontinuity property proven above.
\begin{itemize}
\item
The compactness of $K$. 
\end{itemize}
This is more delicate. If ${\cal B}$ is bounded in $Z$,
we show that ${\cal K} = 
K {\cal B}$ is relatively compact in $Z$ by use of the
generalized Ascoli theorem \cite[Theorem 6.1, p.\ 290]{Munk}. 
This requires equicontinuity of the family ${\cal K}$, 
shown above. It also requires that 
$$
{\cal K}_{t} = \{u(t): u \in {\cal K} \}
$$ 
is relatively compact in $X$ for each $t \in J$. Since $Y$ is compactly
embedded in $X$, it is sufficient to show that ${\cal K}_{t}$ is bounded in
$Y$ for each $t \in J$. However, an application of Remark \ref{U}
immediately implies this. This concludes the proof. 
\end{proof}
\begin{theorem}
\label{global.existence}
The mapping $K$ has a fixed point $\Psi$. In particular,  
$$
i \hbar \frac{\partial \Psi(t)}{\partial t} = 
-\frac{\hbar^{2}}{2 m}\nabla^{2}\Psi + V_{\rm ex}\Psi 
+ (W \ast |\Psi|^{2}) \Psi,  
\; \Psi(\cdotp, 0) = \Psi_{0}.
$$
\end{theorem}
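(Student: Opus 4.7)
The plan is to apply Schauder's fixed point theorem to the mapping $K: Z \to Z$. The preceding lemma has already supplied the two structural ingredients (continuity and compactness of $K$), so the only remaining work is to identify an invariant closed bounded convex subset of $Z$ on which to apply the theorem, and then to verify that a fixed point of $K$ does indeed satisfy the stated nonlinear Schrödinger equation.

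First I would exploit the unitarity of the evolution operators on $X$, recorded in Remark \ref{U}: for every $\phi \in Z$ and every $t \in J$, $\|K\phi(t)\|_{X} = \|U^{\phi}(t,0)\Psi_{0}\|_{X} = \|\Psi_{0}\|_{X}$. Consequently, if $R = \|\Psi_{0}\|_{X}$, the closed ball $\mathcal{B}_{R} \subset Z$ of radius $R$ centered at the origin is mapped into itself by $K$. This ball is closed, bounded, and convex in $Z$, and the restriction of $K$ to $\mathcal{B}_{R}$ is still compact and continuous by the lemma. Schauder's theorem then produces a fixed point $\Psi \in \mathcal{B}_{R}$ with $\Psi = K\Psi$.

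Next I would translate this fixed-point identity into the differential equation. By the definition of $K$ and Proposition \ref{generate}, $\Psi(t) = U^{\Psi}(t,0)\Psi_{0}$, where $U^{\Psi}$ is the evolution operator associated with the time-dependent family
\[
\hat{H}^{\Psi}(t) = -\frac{\hbar^{2}}{2m}\nabla^{2} + V_{\rm ex}(\cdot,t) + (W \ast |\Psi(\cdot,t)|^{2}).
\]
Because $\Psi_{0}$ belongs to the smooth space $Y$, which is left invariant by the $U^{\Psi}(t,s)$, the general theory of Theorem \ref{UE} applied on $Y$ yields that $t \mapsto \Psi(t)$ is differentiable in $X$ with $(d/dt)\Psi(t) = -(i/\hbar)\hat{H}^{\Psi}(t)\Psi(t)$ and $\Psi(0) = \Psi_{0}$, which is precisely the claim.

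The main obstacle is not analytic but structural: one must confirm that the Hamiltonian associated with the evolution operator $U^{\Psi}$ appearing on the right side of $\Psi = K\Psi$ truly incorporates the Hartree term $W \ast |\Psi|^{2}$ built from the same $\Psi$, so that the fixed-point identity genuinely encodes the nonlinear coupling. This requires invoking Proposition \ref{generate} to guarantee that the admissibility hypotheses (stability, invariance on $Y$, and the smoothness needed for differentiability of $t \mapsto U^{\Psi}(t,0)\Psi_{0}$ in $X$) all hold along the trajectory of the fixed point, given only that $\Psi \in Z$. Once this bookkeeping is confirmed, the passage from the fixed-point equation to the stated PDE is immediate.
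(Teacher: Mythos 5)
Your argument is correct, but it follows a genuinely different route from the paper's. You invoke Schauder's fixed point theorem on the closed ball of radius $R=\|\Psi_{0}\|_{X}$ in $Z$, which is invariant under $K$ because the evolution operators $U^{\phi}(t,0)$ are isometries on $X$ (Remark \ref{U}); combined with the continuity and compactness supplied by the lemma, this immediately yields a fixed point, and your passage from $\Psi=K\Psi$ to the differential equation via Proposition \ref{generate} and property III$^{\prime}$ of Theorem \ref{theorem 3.2} is the right bookkeeping (and is spelled out more explicitly than in the paper, which leaves that step implicit). The paper instead applies the Leray--Schauder theorem: it considers solutions of $u=sKu$, $0<s\leq 1$, rewrites this as the initial value problem (\ref{FPEq}) with coupling constant $s^{2}$, and derives the energy identity (\ref{consener}) by multiplying by $\partial\bar{\Psi}/\partial t$ and taking real parts, obtaining an a priori bound independent of $s$. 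Strictly for the purpose of bounding $u$ in $Z=C(J;X)$, your observation that unitarity alone does the job is sound --- indeed the paper itself remarks at the end of its estimate that $\Psi$ has $X$-norm equal to that of $\Psi_{0}$ --- so your proof is shorter and more elementary. What the paper's route buys in exchange is the energy conservation law itself, which has independent physical content (it quantifies the work done by the time-dependent external potential) and delivers $L^{2}$ gradient bounds on the fixed point depending only on $\Psi_{0}$, $V_{\rm ex}$ and $\partial_{t}V_{\rm ex}$; moreover the Leray--Schauder scheme would survive in settings where the propagators are not exactly norm-preserving on $X$, whereas your invariant-ball argument relies essentially on that unitarity.
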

\begin{proof}
We use the Leray-Schauder theorem \cite{GT}.
Suppose $u = sKu$, for some $s$, where $0 < s \leq 1$. It is necessary to
establish a bound for $u$ in $Z$, which is independent of $s$; note that
$u$, in general, depends on $s$. It is easier to work with $\Psi = Ku$,
which satisfies the initial value problem:
\begin{equation}
\label{FPEq}
i \hbar \frac{\partial \Psi(t)}{\partial t} = 
-\frac{\hbar^{2}}{2 m}\nabla^{2}\Psi + V_{\rm ex}\Psi 
+ s^{2}(W \ast |\Psi|^{2}) \Psi,  
\; \Psi(\cdotp, 0) = \Psi_{0}.
\end{equation}
The technique we use is 
conservation of energy, formulated to include the external potential. 
We establish the following:
\begin{itemize}
\item
If the energy is defined 
for $0 < t \leq T$ by,  
$$
E_{s}(t)=\int_{\Omega}\left[\frac{{\hbar}^{2}}{4m}|\nabla \Psi|^{2} 
+ 
\left(\frac{s^{2}}{4}(W \ast |\Psi|^{2})+ 
V_{\rm ex}\right)|\Psi|^{2}\right]\;dx_{1}dx_{2}dx_{3},
$$
then the following identity holds:
\end{itemize} 
\begin{equation}
E_{s}(t)=E_{s}(0)
+\int_{0}^{t}\int_{\Omega}(\partial V_{\rm ex}/\partial r)({\bf x},r)
|\Psi|^{2}\;dx_{1}dx_{2}dx_{3}dr,
\label{consener}
\end{equation}
where 
$$E_{s}(0) =  
\int_{\Omega}\left[\frac{{\hbar}^{2}}{4m}|\nabla
\Psi_{0}|^{2}+\left(\frac{s^{2}}{4} 
(W\ast|\Psi_{0}|^{2})+V_{\rm ex}\right)|\Psi_{0}|^{2}\right]
\;dx_{1}dx_{2}dx_{3}.
$$
We first observe that (\ref{consener}) is sufficient to imply that the
functions $\{\Psi\}$, and hence the functions $\{u\}$, are bounded in $Z$; 
indeed, $L^{2}$ gradient bounds for $\Psi$ are obtained from $E_{s}(t)$.
These bounds 
depend only on $\Psi_{0}, V_{\rm ex}$, and the time derivative  
of $V_{\rm ex}$. Note that 
$\Psi$ has $X$ norm equal to that of $\Psi_{0}$.

It remains to verify (\ref{consener}); in fact, we establish its
derivative:
\begin{equation}
\label{consenerder}
0 = \frac{dE_{s}}{dt} - 
\int_{\Omega}(\partial V_{\rm ex}/\partial t)({\bf x},t)
|\Psi|^{2}\;dx_{1}dx_{2}dx_{3}.
\end{equation}
We use (\ref{FPEq}): multiply by $\partial {\bar \Psi}/\partial t$,
integrate over $\Omega$, 
and take the real part. This is a standard technique and yields
(\ref{consenerder}). This concludes the proof. 
\end{proof}

\section{Numerical Simulations and Discussions}
In this section, we propose to illustrate the validity of the Theorems 
\ref{theoremA} and  \ref{theoremB} using a selected pair of realistic 
numerical experiments (these examples are not restrictive). 
The first example in \ref{sec_ex1} considers the real-time
propagation of the Kohn-Sham wave functions with an external potential 
$V({\bf x},t)$ 
which is linear in the quantum state. The second example in \ref{sec_ex2}
presents the TDDFT real-time propagation model
 within the adiabatic local density approximation (ALDA), where the potential, 
which includes both the Hartree and exchange-correlation
terms, is non-linear in the quantum state but local in time. In the 
following, we first describe some elements of the numerical modeling strategy 
that have been used in both examples including the finite-element 
discretization, the spectral decomposition of the evolution operator, and 
the FEAST eigenvalue algorithm.

\subsection{Numerical modeling}

For a system composed of $N_e$ electrons,  the ground state electron density 
$n(\textbf{r})=2\sum_i^{N_e}|\psi_i(\textbf{r})|^2$
 (i.e. $2$ for the spin factor)
can be obtained from the solution of the DFT Kohn Sham stationary equation 
\cite{KS}:
\begin{equation}\label{ks}
\left[ -\frac{\hbar^2}{2m}\nabla^2+v_{KS}[n](\textbf{r}) 
\right]\psi_j(\textbf{r})= E_j \psi_j(\textbf{r}),
\end{equation}
where the Kohn-Sham potential $v_{KS}$ is a functional of the density and it is 
conventionally separated in the following way:
\begin{equation}
\label{eqvks}
v_{KS}[n](\textbf{r})=v_{ext}(\textbf{r})+v_{ion}(\textbf{r})
+v_{H}[n](\textbf{r})+v_{xc}[n](\textbf{r}),
\end{equation}
where $v_{ext}$ is the external potential; $v_{ion}$ is the ionic or 
core potential;  $v_H$  is the Hartree potential which 
accounts for the electrostatic interaction between the electrons and 
is the solution of a Poisson equation; and $v_{xc}$ is the 
exchange-correlation potential which accounts for all the non-trivial 
many-body effects.

In TDDFT, all the $N_e$ initial wave functions 
$\Psi=\{\psi_1,\psi_2,\hdots,\psi_{N_e}\}$, which are  
solutions of the Kohn-Sham system (\ref{ks}), are propagated in time using 
a time-dependent Schr\"odinger equation: 
\begin{equation}\label{kst}
i\hbar\frac{\partial}{\partial t}\psi_j(\textbf{r},t)=
\left[-\frac{\hbar^2}{2m}\nabla^2+
v_{KS}[n](\textbf{r},t)\right]\psi_j(\textbf{r},t), \quad
\forall j=1,\dots,N_e.\end{equation}
The electron density of the interacting system can then be obtained at 
a given time from the time-dependent Kohn-Sham wave functions
\begin{equation}
n(\textbf{r},t)=2\sum_{j=1}^{N_e} |\psi_j(\textbf{r},t)|^2.\end{equation}
In our numerical experiments, we consider the ALDA approach 
where the exchange-correlation potential $v_{xc}$ in (\ref{eqvks}) 
depends locally on time and 
it is a functional of the local density $n(\textbf{r},t)$ i.e.
\begin{equation}
\label{eqvkst}
v_{KS}(n(\textbf{r},t))=v_{ext}(\textbf{r},t)
+v_{ion}(\textbf{r})+v_{H}(n(\textbf{r},t))+v_{xc}(n(\textbf{r},t)).
\end{equation}
As discussed in this article, we consider the integral form of (\ref{kst}) 
defined in (\ref{prop}) along with the time-discretization of the 
evolution operator given in (\ref{oper111}).

The discretization of the Hamiltonian operator in real-space is performed
using the finite element method. 
For the choice of the elements, we consider respectively prisms 
for example 1 and tetrahedra for example 2, 
 using either quadratic or cubic precision. If $\mathbf{H}$ 
denotes the resulting $N\times N$ Hamiltonian matrix 
at a given time $t$ and if $N$ represents the number of finite-element
nodes, the spectral decomposition of $\mathbf{H}$ can be written as follows:
\begin{equation}\label{eig}
\mathbf{D}(t)=\mathbf{P}^{T}_t \mathbf{H}(t)\mathbf{P}_t,
\end{equation}
where the columns of the matrix $\mathbf{P}_t$ 
represent the eigenvectors of $\mathbf{H}(t)$ associated with the 
eigenvalues regrouped within the diagonal matrix $\mathbf{D}(t)$. 
Since the $N_e$ propagated states are low-energy states, it is reasonable 
to obtain very accurate spectral approximations, even by using a partial 
spectral decomposition, where
one considers a number $M$ of lowest eigenpairs much smaller than the size
$N$ of the system but greater than $N_e$ (i.e. $N_e<M<<N$).
In all of our numerical experiments, increasing the value of our choice for 
$M$ has had no influence on the stability of the solutions.
The exact error analysis introduced in this spectral decomposition is 
proposed as future work in section \ref{sec8}.

Since the discretization is performed using non-orthogonal basis functions 
(e.g. finite element basis functions), the eigenvalue 
problem that needs to be solved at given time $t$ takes the generalized form:
\begin{equation}\label{eig1}
\mathbf{H}(t)\mathbf{p_i}(t)=\mathrm{d_i}(t)\mathbf{S}\mathbf{p_i}(t),
\end{equation}
where $\mathbf {S}$ is a symmetric positive-definite matrix, and the 
eigenvectors ${\bf p_i}(t)$ 
are  $\mathbf{S}$-orthonormal i.e.  
$\mathbf{P}^{T}_t\mathbf{S}\mathbf{P}_t=
\mathbf{I}$ with $\mathbf{P}_t=\{{\bf p_1}(t),{\bf p_2}(t),\dots,{\bf p_M}(t)\}$.
By use of the spectral decomposition of the Hamiltonian (\ref{eig}), 
the exponential in (\ref{oper1111})
acts only on the eigenvalue matrix $\mathbf{D}(t)$, 
and one can show that the resulting matrix form of the time propagation 
equation is given by:
\begin{equation}\label{propd}
\mathbf{\Psi}(t+\Delta_t)=
\mathcal{T}\left\{ \prod_{j=1}^p\left[ \mathbf{P}_{t_j} 
\exp\left(-\frac{i}{\hbar}\xi\omega_j
\mathbf{D}(t_j)\right) \mathbf{P}^T_{t_j} \mathbf{S} \right]   \right\} 
\mathbf{\Psi}(t).
\end{equation}

In real large-scale applications, a direct solution of the evolution operator 
has often been considered impractical, since it
requires solving a hundred to a thousand eigenvalue problems 
along the time domain 
(one eigenvalue problem for each time step). 
However, we rely on the capabilities of the new 
FEAST eigenvalue algorithm  \cite{POL} and solver \cite{feast}, which is 
ideally suited for addressing such challenging calculations. 
FEAST is a general purpose algorithm for obtaining selected eigenpairs 
within a given search interval.
It consists of integrating the solutions of very few independent linear systems
for the Green's function 
\mbox{${{\bf G}({Z})}= ({Z}{\bf S}-{\bf H})^{-1}$} of size $N$ along 
a complex contour (typically $8$ to $16$ contour points 
by use of a Gauss-Legendre quadrature), and 
 one reduced dense eigenvalue problem arising from a Raleigh-Ritz 
procedure (of size $M_0\simeq1.5M$ in the present case).
FEAST relies also on a subspace-iteration procedure where convergence is 
often reached in 
very few iterations ($\sim 3$) to obtain up to thousands of eigenpairs with 
machine accuracy. 
An efficient parallel implementation can be addressed  at three different 
levels ranging from 
the selection of the search intervals, to solving independently the inner 
linear systems along with their own parallel treatment.
As a result, the algorithm complexity for performing the spectral 
decomposition (\ref{eig}) is directly dependent on solving a single 
complex linear system of size $N$. 
In comparison with a Crank-Nicolson scheme where 
small time intervals are needed and the linear systems need to be solved one after another, 
the spectral approach allows for larger time intervals and a parallel implementation of FEAST
 requires only one linear system to be solved by interval. It is important to note that even if $M$
becomes very large, linear parallel scalability can still be obtained using
multiple contour intervals and an appropriate parallel computing power.
Finally, FEAST is also ideally suited for addressing 
efficiently the time propagation equation in (\ref{propd}), 
since it can take advantage of the subspace computed at a given time 
step $j$ as initial
guess for the next time step ${j+1}$ in order to speed-up the 
numerical convergence.

\subsection{Example 1}\label{sec_ex1}

We consider the real-time propagation of the Kohn-Sham quantum states for a 
Carbon nanotube (CNT) device in interaction with an electromagnetic (EM) THz 
radiation \cite{CP}.
In this example, the three dimensional time dependent potential (\ref{eqvkst}) 
does not depend on the electron density and
takes the following form:
$$ v_{KS}({\bf r},t)=v_{eps}({\bf r})+v_{ext}({\bf r},t),$$
where $v_{eps}$ is a time-independent atomistic empirical pseudopotential which
approximates the effect of $v_{ion}$, $v_H$ and $v_{xc}$ at $t=0$, 
and $v_{ext}$ is a time-dependent external potential applied along the 
longitudinal direction $x$ of the CNT, i.e.,  
$v_{ext}=v_{0}((2x-L)/L)sin(\omega t)$ with $x\in[0,L]$, which leads to 
a constant electric field along the direction of $x$. 
For performing the 3D simulations, we consider 6 unit cells of a (5,5) CNT 
with length $L=1.98$nm, $v_{0} = 5$eV, and  $\omega=2\pi f$, with $f=200$ THz.
In our simulations, all the solution wave functions $\bf \Psi$ (\ref{propd}) 
will be propagated from
$t = 0$ to $t = 8T$, where $T = 1/f = 5\times 10^{-15} s$ 
denotes the period of the EM radiation.
Figure \ref{fig1} provides the time evolution of the energy expectation 
for the highest occupied molecular orbital 
(HOMO level) by using both (i) the rectangular
rule and small time-step intervals $\Delta=T/p$, and 
(ii) the high-order integration rule with a long time-step interval 
$\Delta=T$ and $p$ interior points.
\begin{figure}[htbp]
\centering
\includegraphics[width=1.0\linewidth,angle=0]{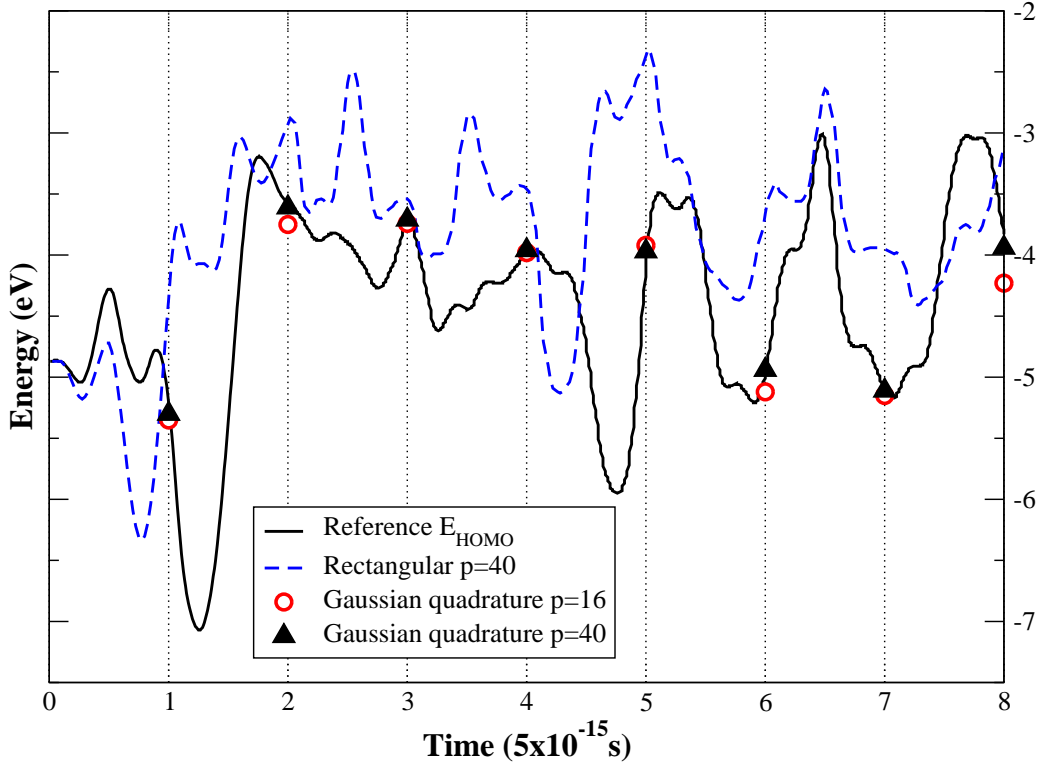}
\caption{\label{fig1} Evolution of the energy expectation of the HOMO level 
along $8$ time
periods of the EM THz radiation (i.e. $E(t)={\bf \Psi^\dagger}_{N_e}(t) {\bf H}(t) {\bf \Psi}_{N_e}(t)$). 
The solid lines represent the
reference solution. The result for the energy evolution obtained by using a 
rectangular rule  with $p = 40$  
diverges after a few time steps. The same number of interior points, however, 
is adequate to capture the solutions accurately at the end of each
time period by using the Gauss quadrature scheme.
The solutions obtained using the $p = 16$ Gauss scheme begin to
be affected by 
the approximation constructed from 
the decomposition of the exponential (\ref{oper1111}) 
due to an increase in distance between integration points. 
We note that the intermediate solutions obtained using the 
Gauss integration rule have no
physical meaning, and are not then represented here.}
\end{figure}
The reference solutions have been obtained using the rectangular rule and 
$p=120$, where the solution has converged.
Using a rectangular rule with $p = 40$ integration points by period, 
one notes that the predicted 
results begin to diverge after a few time steps, and 
this phenomenon amplifies with time. 
From Theorem \ref{theoremA}, it is necessary to increase $p$ 
(i.e., decrease the time-step interval) to improve the convergence rate
of the rectangular approximation. In contrast,  
one can see from the numerical results that $p = 40$ interior points,  
by using a high-order Gauss 
integration scheme, does suffice to 
obtain the solution accurately at each long-time interval $\Delta$. 
This result can be justified by 
Theorem \ref{theoremB}. 
By decreasing the number of interior points $p$ even further, 
it is expected to obtain  a lower order of approximation.  
One can indeed 
confirm a lower convergence rate from the numerical results of the Gauss 
integration scheme using  $p=16$ interior points.
For these lower values of $p$, one finds convergence comparable to the 
steps of  
the exponential product evaluation (\ref{oper1111}). 
This emphasizes the higher order convergence of the integration 
rule (see section \ref{C}).

\subsection{Example 2}\label{sec_ex2}

This example focuses on obtaining 
the evolution of the time-dependent dipole moment of the CO molecule 
by using a real-time propagation approach and the non-linear
TDDFT-ALDA model. We follow a similar procedure to that presented in \cite{YB} 
where once the ground-state DFT density and the Kohn-Sham states are obtained, 
a short polarized impulse is applied along the longitudinal or perpendicular 
direction of the molecule. If $z$ denotes the perpendicular 
direction of the molecule, after a short delta impulse along $z$, 
the Kohn-Sham states (\ref{ks}) are modified as follows:
\begin{equation}
 \psi_j(\textbf{r},t=0^+)= \exp(-\imath Iz/\hbar) \psi_j(\textbf{r},t=0),
\end{equation}
where $I$ is the magnitude of the electric field impulse.
Thereafter, equation (\ref{propd}) is solved by using a non-linear potential 
(\ref{eqvkst}) and no external perturbation (i.e. $v_{ext}=0$).
It is also important to note that  our simulations are performed by using 
an {\em all-electron} model since the
 potential $v_{ion}$ is not approximated and includes the core potentials.
The density obtained at each time step $\Delta_t$, is used to compute the 
induced dipole of the system:
\begin{equation}
 D(t)= \int_{\Omega} {\mathbf r}\left(n(\textbf{r},t)
- n(\textbf{r},0)\right) {\bf dr},
\end{equation}
which is relative to the center of mass of the molecule. 
$D(t)$ is a quantity of interest since the imaginary part of its Fourier
transform provides the dipole strength function and, for this example, 
the optical absorption spectrum along with
 the true many-body excited energy levels. Since we are investigating the 
optical frequency response rather than the THz
response presented in the first example, the time intervals are here chosen 
relatively shorter. 
Figure \ref{fig2} presents the time evolution of the dipole $D(t)$ obtained 
by using both 
a rectangular rule  with a time-step $\Delta_t=1\times10^{-17}$s and a 
Gauss quadrature scheme using $p=1$ 
and a time interval $\Delta_t=2\times10^{-17}$s. For the Gauss-scheme, 
a matrix exponential has to be evaluated at the middle of the interval
$[t,t+\Delta_t]$ for each $t+(1\times10^{-17}$)s (i.e. Gauss-1 presents 
only one node in the middle of the interval).
\begin{figure}[htbp]
\centering
\includegraphics[width=1.0\linewidth,angle=0]{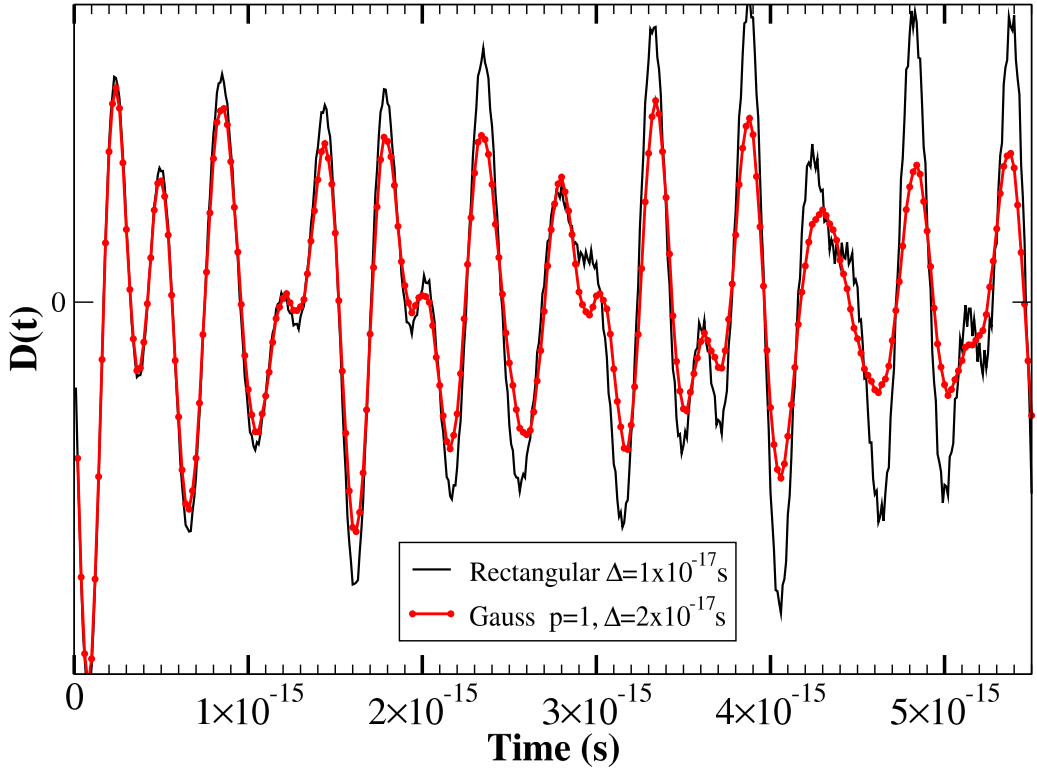}
\caption{\label{fig2} Time evolution of induced dipole moment of the CO 
molecule after a short-impulse along the perpendicular direction. 
 The results for the rectangular rule (i.e. direct approach) are obtained 
by using the time step $1\times 10^{-17}$ while
a time step of $2\times 10^{-17}$ is considered for the Gauss quadrature 
scheme with $p=1$. This latter scheme presents then only one interior point 
in the middle of the interval. Although the variation for the two responses 
presents a similar pattern, the Gauss-1's curve appears much smoother 
than the curve obtained using the  rectangular approximation.}
\end{figure}
One notes that both curves are identical at the early stage of the 
time evolution, and present also a similar frequency pattern behavior. 
However, the rectangular approximation clearly presents a staircase 
pattern which can be attenuated by using
a much smaller time-step as shown in Figure \ref{fig3}. 
\begin{figure}[htbp]
\centering
\includegraphics[width=1.0\linewidth,angle=0]{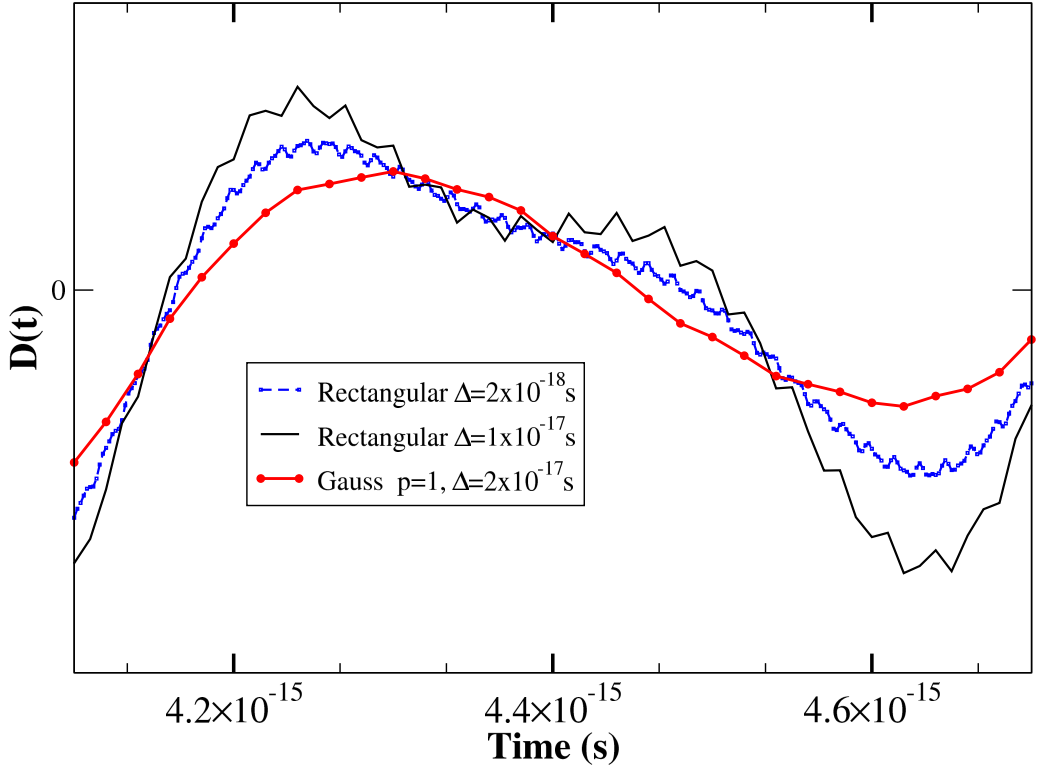}
\caption{\label{fig3}  Time-evolution sample of induced dipole moment of the CO
 molecule after a short-impulse along the perpendicular direction. 
These results are identical to the one presented in Figure (\ref{fig2}) 
over a selected period in time; 
the result for a rectangular rule using a much smaller time step, 
$\Delta_t=2\times 10^{-18}$, is also provided for comparison.
One notes that the rectangular approximation benefits from shorter time 
intervals since the resulting curve is both smoother
than the one obtained using $\Delta_t=1\times 10^{-17}$, and closer to the 
result obtained using Gauss-1.}
\end{figure}
This variation pattern is most likely related to the $P^0$ approximation 
used by the rectangular approximation
between intervals, while the Gauss-1 scheme is associated with a 
$P^1$ approximation.
It should be noted, however, that the rectangular rule provides a direct 
propagation scheme where the potential is always known in advance 
at a given time step $t$. In contrast, the Gauss scheme requires an a priori 
evaluation of the unknown potential 
 at the $p$ Gauss interior points.  
In practice, it is possible to use different extrapolation or 
predictor/corrector schemes, although
the overall procedure should ideally be self-consistent \cite{TDDFTbook}. 
All the models presented in this article where
 the potential is required to be known beforehand in the time interval, 
would then remain valid if such a self-consistent iterative procedure occurs.
The current example does not take advantage of the self-consistent procedure, 
and we have then considered only the use of the Gauss-1 propagation 
scheme where the potential/density is obtained beforehand in the middle of 
the interval.
Interestingly, the Gauss-1 model is known as the exponential mid-point rule by 
the TDDFT community \cite{CMR}, a robust scheme that also preserves
the time-reversal symmetry. Our general analysis offers here some
perspectives 
to go beyond the Gauss-1 scheme in order to provide 
more accuracy by using longer time-intervals. More details on the absorption 
spectrum and other physical results obtained by using the 
Gauss-propagating scheme will be provided elsewhere \cite{CP2}.


\section{Future Work and Perspectives}\label{sec8}
We have presented and analyzed a numerically efficient discretization approach 
for the general problem of real time propagation 
of the time-dependent evolution operator. Modern 
``matrix diagonalization techniques'', such as FEAST \cite{POL}, permit
the creation of
new methodologies for real-time propagation of large-scale quantum 
systems using direct integration and discretization of the 
time-ordered evolution operator.
As shown, it is also possible to define an approach that allows a  
significant reduction of
the number of eigenvalue problems which are solved in the time-stepping,
due to the smaller number of time step quadratures. 
Both the number and solution accuracy of these eigenvalue problems
contribute to the computational complexity in TDDFT.
\begin{itemize}
\item
Implicit in the time discretization is the further spectral approximation of
the evolution operators, inherent in the eigenvalue/eigenvector
calculations. An exact error analysis will incorporate both of these
features.
In terms of approximation theory, the error introduced by the spectral
approximations should be balanced by that of the time discretization. 
\end{itemize}
A detailed analysis of the numerical linear algebra 
of this spectral approximation 
step may be found in \cite{CP}, which appears to be 
one of the first instances of correlation of this type.
If time discretization is measured by the rectangular rule, this permits the
flexibility of lower-dimensional intermediate spectral approximation as
discussed, and implemented, in \cite{CP}. 
One can note that the techniques proposed here will be extremely efficient for 
linear physical systems using very large time-intervals.
The traditional notions of interacting and non-interacting systems in 
quantum physics are often used within the 
context of the single electron picture. Within TDDFT, the many body
problem becomes numerically tractable, but also non-linear with respect
to the 
electron density (i.e., interacting system). Since 
the electron density exhibits much weaker variations as compared to the 
variations 
of the individual wave functions, it would then become advantageous to use
time-intervals that are capable of capturing 
the variation of the electron density with time, while still being much 
longer than
the traditional short-time steps of rectangular approximations. 
Convergence analysis of a fully non-linear scheme would represent
an important step in TDDFT, and it is also a component of our future work.
However, local existence is much easier: 
\begin{itemize}
\item
The framework presented here establishes the existence of local in time
solutions to certain non-linear TDDFT systems, formulated for closed
systems. 
\end{itemize}
This is a consequence of Kato's theory 
(cf.\thinspace \cite{HKM}, \cite[Th. 7.2.4]{J1}), based on the contraction
mapping theorem.
We simply state, in summary, the character of the result, when the
potential is perturbed by a nonlinear function $\phi(\rho)$ of the charge.
As long as $\phi$ is bounded, with bounded derivatives, then 
the hypothesis of Theorem \ref{theorem 3.2} holds as long as $Y$ is
identified with a Sobolev space of sufficiently high index $s > 5/2$.
In this case, the isomorphism from $Y$ to $X$ is
implemented by intermediate spaces, described in \cite[pp.\thinspace
244--247]{Aubin}. The other hypotheses of the local existence theorem are
routine within the framework developed here.
These results are consistent with those obtained by other methods for
nonlinear Schr\"{o}dinger equations \cite{Caz}. 
In particular, it includes the case of the  exchange-correlation potential.
In future work, we will aim for establishing a global in time existence
theory, via the evolution operators,  
for non-linear TDDFT systems, which extends the applicability of
Theorem \ref{global.existence} and is also consistent with 
the literature \cite{Caz}. 

\section*{Acknowledgments}  
The second author is supported by the National Science Foundation under 
grants No ECCS 0846457 and No ECCS 1028510.

\appendix

\section{Time-Ordered Evolution Operators}
Time dependent quantum mechanics is ideally suited to the use of 
Kato's evolution operators, introduced in \cite{K1,K2}, improved in
\cite{D}, and summarized in detail in \cite[Ch.\ 6]{J1}. We present a
concise summary here, coupled to the hypotheses discussed earlier.
\subsection{Defining properties on the frame space $X$}
We briefly summarize the result. We begin with a complex Banach
space $X$ and denote by $G(X)$ the family of negative generators of
$C_{0}$-semigroups on $X$. We discuss the general case in this section;
the case of the Hamiltonian is retrieved by $A(t) \mapsto (i/\hbar)\hat H(t)$. 
\begin{definition}
\label{def3.1}
If a family $A(t) \in G(X)$ is given on $0 \leq t \leq T$, the family is
stable if there are stability constants $M, \omega$ such that 
\begin{equation}
\| \prod_{j = 1}^{k} [A(t_{j}) + \lambda]^{-1} \| \leq M
(\lambda - \omega)^{-k}, \;\; \mbox{for} \; \lambda > \omega,
\end{equation}
for any finite family $\{t_{j}\}_{j=1}^{k}$, with $0 \leq t_{1} \leq \dots
\leq t_{k} \leq T$. Moreover, $\prod$ is time-ordered:  
$[A(t_{\ell}) + \lambda]^{-1}$ is to the left of 
$[A(t_{j}) + \lambda]^{-1}$ if $\ell > j$. 
If $Y$ is densely and continuously embedded in $X$, 
and $A \in G(X)$, $Y$ is $A$-admissible if $\{e^{-tA}\}|_{Y}$ is 
invariant, and forms a 
$C_{0}$-semigroup on $Y$. 
\end{definition}

\smallskip
These are the preconditions for the theorem on the unique existence of the
evolution operators. 
\subsection{The general theorem for the frame space}
The following theorem concatenates \cite[Theorem 6.2.5, Proposition
6.2.7]{J1}.
\begin{theorem}
\label{UE}
Let $X$ and $Y$ be Banach spaces such that $Y$ is densely and continuously
embedded in $X$. Let $A(t) \in G(X), 0 \leq t \leq T$ and assume the
following.
\begin{enumerate} 
\item
The family $\{A(t)\}$ is stable with stability index
$(M, \omega)$.
\item
The space $Y$ is $A(t)$-admissible for each $t$. The family of generators
on $Y$ is assumed stable. 
\item
The space $Y \subset D_{A(t)}$ and the mapping $t \mapsto A(t)$ is
continuous from $[0, T]$ to the normed space $B[Y, X]$ of bounded linear
operators from $Y$ to $X$.
\end{enumerate}
Under these conditions the evolution operators $U(t,s)$ exist uniquely 
as bounded linear operators on $X$, $0 \leq s \leq t \leq T$ with the
following properties.
\begin{itemize}
\item[I]
The family $\{U(t,s)\}$ is strongly continuous on $X$, jointly in $(t,s)$,
with:
$$
U(s,s) = I, \; \|U(t,s)\|_{X} \leq M \exp[\omega (t-s)].
$$
\item[II]
The time ordering is expressed by:
$$
U(t,r) = U(t,s)U(s,r).
$$ 
\item[III]
If $D^{+}_{t}$ denotes the right derivative in the strong sense, then
$$
[D^{+}_{t}U(t,s)g]_{t=s} = -A(s) g, \; g \in Y, \; 0 \leq s < T.
$$
\item[IV]
If $d/ds$ denotes the two-sided derivative in the strong sense, then
$$
(d/ds) U(t,s)g = U(t,s)A(s)g, \;  
g \in Y, \; 0 \leq s \leq t \leq T.
$$
This is understood as one-sided if $s=t$ or $s = 0$.
\end{itemize}
\end{theorem}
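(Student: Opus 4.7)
The plan is to follow the classical Kato--Dorroh construction via piecewise-constant approximating generators, leveraging exactly the three hypotheses stated. First I would introduce step approximations $A_n(t) := A(T\lfloor nt/T\rfloor/n)$, piecewise constant on the subintervals $I_j^n := [jT/n,(j+1)T/n)$. On each such subinterval $A_n$ is a single generator of a $C_0$-semigroup, so I can set $U_n(t,s) = e^{-(t-s)A_n(s)}$ when $s,t$ share a subinterval, and extend to all $0\le s\le t\le T$ by imposing the time-ordering relation $U_n(t,r) = U_n(t,\sigma)U_n(\sigma,r)$. The stability hypothesis 1 implies $\|U_n(t,s)\|_X \le M e^{\omega(t-s)}$ uniformly in $n$, since the product of one-step exponentials is exactly the time-ordered product controlled in Definition \ref{def3.1}. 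The $A(t)$-admissibility and stability on $Y$ in hypothesis 2 yields an analogous uniform bound $\|U_n(t,s)\|_Y \le M_Y e^{\omega_Y(t-s)}$.

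Second, I would show that $\{U_n(t,s)g\}$ is Cauchy in $X$ for each $g\in Y$. The workhorse is the identity
\begin{equation*}
U_n(t,s)g - U_m(t,s)g = -\int_s^t U_n(t,\sigma)\bigl[A_n(\sigma)-A_m(\sigma)\bigr]U_m(\sigma,s)g\,d\sigma,
\end{equation*}
obtained by differentiating $U_n(t,\sigma)U_m(\sigma,s)g$ in $\sigma$ (well-defined off the finitely many grid points). The uniform $X$-bound on $U_n$ together with the uniform $Y$-bound on $U_m g$ gives
\begin{equation*}
\|U_n(t,s)g - U_m(t,s)g\|_X \le C\,T\,\|g\|_Y\sup_{\sigma\in[0,T]}\|A_n(\sigma)-A_m(\sigma)\|_{Y,X},
\end{equation*}
and hypothesis 3 forces the right-hand side to zero as $m,n\to\infty$. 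Define $U(t,s)g := \lim_n U_n(t,s)g$ for $g\in Y$ and extend by density to $X$ using the uniform $X$-bound, yielding $\|U(t,s)\|_X \le M e^{\omega(t-s)}$ and the bound in property I.

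Third, I would transfer the remaining properties. Strong joint continuity in $(t,s)$ follows from an $\varepsilon/3$ split into the continuity of $U_n$ plus the uniform Cauchy estimate. Time-ordering II passes to the limit directly from the analogous property of $U_n$. For the right derivative III at $s$, I would write $U(t,s)g - g = [U(t,s)-e^{-(t-s)A(s)}]g + [e^{-(t-s)A(s)}g-g]$ and show the first piece is $o(t-s)$ by applying the Cauchy-type estimate specialized to the constant approximant $A(s)$, while the second yields $-A(s)g$ from standard $C_0$-semigroup theory. Property IV is the main obstacle: one must first establish the invariance $U(t,s)Y\subset Y$ by a second limiting argument using the uniform $Y$-bound (e.g.\ weak-$*$ compactness in $Y$ combined with identification of the strong $X$-limit), and then differentiate in $s$ in the integral representation $U(t,s)g = U_n(t,s)g + \int_s^t U(t,\sigma)[A_n(\sigma)-A(\sigma)]U_n(\sigma,s)g\,d\sigma$, controlling the integrand in $B[Y,X]$ as $n\to\infty$.

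The hard part, as indicated, is precisely property IV and its prerequisite $Y$-invariance: the strong $X$-limit is not automatically a bounded operator on $Y$, and two-sided differentiability in the left variable requires uniform $Y$-valued control of $U_n(\sigma,s)g$ together with operator-norm estimates of $A_n(\sigma)-A(\sigma)$ in $B[Y,X]$. Once invariance and differentiability are in hand, uniqueness is routine: for any second candidate $\tilde U$ satisfying I--IV, differentiating $\sigma\mapsto U(t,\sigma)\tilde U(\sigma,s)g$ for $g\in Y$ gives zero by properties III, IV applied to $U$ and $\tilde U$, so $U=\tilde U$ on the dense subspace $Y$ and hence everywhere on $X$.
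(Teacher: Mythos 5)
Your proposal reconstructs the classical Kato--Dorroh argument, and that is indeed the ``paper's proof'': Theorem \ref{UE} is quoted background, with the proof delegated to \cite[Theorem 6.2.5, Proposition 6.2.7]{J1}, resting on \cite{K1,K2,D}. The piecewise-constant generators $A_n$, the uniform bounds from stability, and the integral identity for $U_n-U_m$ yielding a Cauchy sequence on $Y$ with extension to $X$ by density are exactly the intended mechanism; the paper itself reuses precisely this machinery in Definition \ref{recrule} and identity (\ref{identity}) for Theorem \ref{theoremA}. So the outline is the right one.

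Two steps of your sketch need repair, and both concern where $Y$-invariance actually enters. First, property IV does \emph{not} require $U(t,s)Y\subset Y$. It is a derivative in the \emph{second} variable: by II, $U(t,s+h)g-U(t,s)g=U(t,s+h)\bigl[g-U(s+h,s)g\bigr]$, so IV follows from I, II and the diagonal derivative III using only $g\in Y\subset D_{A(s)}$ (with a symmetric computation for the left difference quotient). Invariance of $Y$ and the forward equation $(d/dt)U(t,s)g=-A(t)U(t,s)g$ are genuinely harder, are deliberately \emph{not} asserted in Theorem \ref{UE}, and are the content of the separate Theorem \ref{theorem 3.2}, which needs the additional similarity hypothesis $SA(t)S^{-1}=A(t)+B(t)$; they cannot in general be extracted from hypotheses 1--3 alone, so your ``weak-$*$ compactness in $Y$'' step is both unnecessary here and not available. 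Second, and conversely, your uniqueness argument has a real gap of exactly this type: to differentiate $\sigma\mapsto U(t,\sigma)\tilde U(\sigma,s)g$ you must apply IV for $U$ to the vector $\tilde U(\sigma,s)g$, which is only known to lie in $X$, not in $Y$. The standard repair is to compare any candidate $\tilde U$ satisfying I, II, IV with the \emph{approximants} rather than with the other candidate: since $U_n(\sigma,s)g\in Y$ by construction, the function $\sigma\mapsto\tilde U(t,\sigma)U_n(\sigma,s)g$ is legitimately differentiable and gives
\begin{equation*}
\tilde U(t,s)g-U_n(t,s)g=-\int_s^t \tilde U(t,\sigma)\bigl[A(\sigma)-A_n(\sigma)\bigr]U_n(\sigma,s)g\,d\sigma\longrightarrow 0,
\end{equation*}
so every such family coincides with $\lim_n U_n$ on $Y$ and hence on $X$. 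With those two adjustments your argument closes.
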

An important question in the theory is what condition guarantees that the
evolution operators remain invariant on the smooth space $Y$. This is now
addressed.
\subsection{A result for the smooth space: regularity}
We quote a slightly restricted version of \cite[Theorem 6.3.5]{J1}.
\begin{theorem}
\label{theorem 3.2}
Suppose hypotheses (1,3) of Theorem \ref{UE} hold, and that  
there is an isomorphism $S$ of $Y$ onto $X$ such that
$$
SA(t)S^{-1} = A(t) + B(t), \; B(t) \in B[X], 
$$
a.e. on $[0,T]$, where $B(\cdotp)$ is strongly measurable, and 
$\|B(t)\|$ is Lebesgue integrable. 
Then hypothesis (2) of Theorem \ref{UE} holds. Also, the following hold.
\begin{itemize}
\item[I$^{\prime}$]
Invariance:
$$
U(t,s)Y \subset Y, \; 0 \leq s \leq t \leq T.
$$
\item[II$^{\prime}$]
The operator function $U(t,s)$ is strongly continuous on $Y$, jointly in
$s$ and $t$.
\item[III$^{\prime}$]
For each $g \in Y$, 
$$
(d/dt) U(t,s) = -A(t) U(t,s)g, \; 0\leq s \leq t \leq T, s<T.
$$
This derivative is continuous on $X$.
\end{itemize}
\end{theorem}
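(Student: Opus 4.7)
The strategy is to exploit the isomorphism $S:Y\to X$ to transfer between the two spaces, using the identity $SA(t)S^{-1}=A(t)+B(t)=:\tilde{A}(t)$. I would begin by verifying hypothesis (2) of Theorem \ref{UE}. For each fixed $t$, since $-A(t)\in G(X)$ and $B(t)\in B[X]$, the bounded-perturbation theorem for semigroups yields that $-\tilde{A}(t)$ generates a $C_0$-semigroup on $X$. The conjugation then shows that $S^{-1}e^{-r\tilde{A}(t)}S$ is a $C_0$-semigroup on $Y$ whose negative generator agrees with $A(t)|_Y$; by uniqueness of generation, this semigroup is precisely the restriction of $e^{-rA(t)}$ to $Y$, establishing $A(t)$-admissibility of $Y$.

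Next, for stability of $\{A(t)\}$ on $Y$, I would pass to resolvents. The conjugation yields $(A(t)+\lambda)^{-1}=S^{-1}(\tilde{A}(t)+\lambda)^{-1}S$, so any time-ordered product of resolvents has $Y$-norm bounded by $\|S^{-1}\|_{X,Y}\|S\|_{Y,X}$ times the $X$-norm of the corresponding product for $\tilde{A}$. Stability of $\{\tilde{A}(t)\}$ on $X$ follows from stability of $\{A(t)\}$ via Kato's perturbation lemma for stable families, where the hypothesis that $\|B(\cdot)\|$ is Lebesgue integrable is absorbed through a Gr\"{o}nwall-type estimate applied to the product-integral identity relating the two resolvent products. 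This yields stability of $\{A(t)\}$ on $Y$ with constants depending on $M$, $\omega$, $\|S\|_{Y,X}$, $\|S^{-1}\|_{X,Y}$, and $\int_0^T\|B(t)\|\,dt$, completing hypothesis (2) and guaranteeing existence of the evolution operators $U(t,s)$.

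With hypothesis (2) in hand, I would prove invariance I$^{\prime}$ via the rectangular-rule approximation $U_n(t,s)$ of Definition \ref{recrule}. Each $U_n(t,s)$ is a time-ordered product of factors $e^{-(t_j-t_{j-1})A(t_j)}$ which preserve $Y$ by admissibility, with $Y$-norms uniformly bounded by the just-proved $Y$-stability. Theorem \ref{theoremA} yields $U_n(t,s)g\to U(t,s)g$ in $X$ for every $g\in Y$; combined with the uniform $Y$-bound and, in the Sobolev-space setting of the quantum applications, reflexivity of $Y$ to extract a weakly convergent subsequence, the $X$-limit must lie in $Y$ with controlled norm, so $U(t,s)g\in Y$. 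The joint strong continuity II$^{\prime}$ then follows from a standard three-$\varepsilon$ argument combining the $Y$-bound, the time-ordering identity, and strong continuity on $X$. Property III$^{\prime}$ is obtained by combining property III of Theorem \ref{UE}, applied to the factorization $U(t,s)g=U(t,r)U(r,s)g$ at $r=t$ with $U(r,s)g\in Y$ from I$^{\prime}$, with the integral identity $U(t,s)g = g - \int_s^t A(\tau)U(\tau,s)g\,d\tau$, which upgrades the right derivative to a two-sided derivative via continuity of the integrand guaranteed by II$^{\prime}$ and hypothesis (3).

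The main obstacle I expect is the stability transfer: Kato's perturbation lemma for stable families is most cleanly stated for bounded perturbations that are continuous or essentially bounded in $t$, whereas here only strong measurability of $B(\cdot)$ with integrable operator norm is assumed. A natural workaround is to approximate $B(\cdot)$ in the strong operator topology by simple step-valued functions with the same $L^1$ bound, apply the standard perturbation lemma to each approximation, and pass to the limit via dominated convergence on the product-integral representation of the resolvent; care is required to ensure that the stability constants depend only on $\int_0^T\|B(t)\|\,dt$ uniformly along the approximation.
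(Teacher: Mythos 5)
The paper itself offers no proof of this statement: it is quoted verbatim (in slightly restricted form) from \cite[Theorem 6.3.5]{J1}, i.e., the Kato--Dorroh regularity theorem, so your attempt has to be measured against that argument. Your treatment of hypothesis (2) is essentially the standard one and is sound: bounded perturbation gives $\tilde{A}(t)=A(t)+B(t)\in G(X)$, conjugation by $S$ identifies $e^{-r\tilde{A}(t)}$ with $Se^{-rA(t)}S^{-1}$ and hence yields $A(t)$-admissibility of $Y$, and the telescoping identity $\prod_j(\lambda+A(t_j))^{-1}=S^{-1}\prod_j(\lambda+\tilde{A}(t_j))^{-1}S$ transfers stability of $\{\tilde{A}(t)\}$ on $X$ to stability of $\{A(t)\}$ on $Y$ at the cost of the factor $\|S\|_{Y,X}\|S^{-1}\|_{X,Y}$. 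Your concern about $\|B(\cdot)\|$ being merely integrable is legitimate, but the literature handles it directly through the notion of quasi-stability (replacing the constant $\omega$ by $\omega+M\|B(t)\|$ inside the time-ordered product), so the step-function approximation detour is unnecessary.

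The genuine gaps are in I$^{\prime}$--III$^{\prime}$. First, your invariance argument requires reflexivity (or a weak-$\ast$ structure) on $Y$ in order to pass from a $Y$-bounded, $X$-convergent sequence to a limit lying in $Y$; this suffices for the paper's application $Y=H^{2}(\Omega)\cap H^{1}_{0}(\Omega)$ but does not prove the theorem as stated for general Banach $Y$. Second, and more seriously, the ``three-$\varepsilon$'' derivation of II$^{\prime}$ does not work: uniform $Y$-boundedness together with strong continuity on $X$ yields at best weak continuity in $Y$, since a sequence bounded in $Y$ and convergent in $X$ need not converge in $Y$-norm, and the time-ordering identity does not repair this. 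The missing ingredient --- and the heart of the Kato--Dorroh proof --- is to construct the evolution operator $\tilde{U}(t,s)$ on $X$ generated by the perturbed stable family $\{\tilde{A}(t)\}$ (whose rectangular-rule approximants are exactly $SU_{n}(t,s)S^{-1}$) and to establish the identity $U(t,s)=S^{-1}\tilde{U}(t,s)S$ on $Y$. That single identity delivers I$^{\prime}$ (from boundedness of $\tilde{U}$ on $X$), II$^{\prime}$ (strong $X$-continuity of $\tilde{U}$ transported through the isomorphism, with no reflexivity needed), and, combined with property III of Theorem \ref{UE} and the integral identity you cite, III$^{\prime}$. You introduce $\tilde{A}$ at the resolvent and semigroup level but never promote it to an evolution operator, which is why the continuity and differentiability claims cannot be closed as written.
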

\subsection{The initial value problem}
The evolution operators permit the solution of the linear Cauchy problem,
\begin{eqnarray}
\frac{du}{dt} + A(t) u(t) &=& F(t), \\
u(0) = u_{0},
\label{CP}
\end{eqnarray}
on an interval $[0, T]$, with values in a Banach space $X$. 
The formal solution, 
\begin{equation}
u(t) = U(t,0) u_{0} + \int_{0}^{t} U(t,s) \; F(s) \; ds,
\label{SCP}
\end{equation}
holds rigorously 
under assumptions on $u_{0}, F$ (for a precise statement, cf.\thinspace 
\cite[Prop. 6.4.1]{J1}).
In particular, the initial-value problem 
(\ref{eeq},\ref{ic}) is solved by the identifications
$u \mapsto \Psi, A \mapsto \frac{i}{\hbar}\hat H$, with $F = 0$.

\subsection{Admissibility of the Hartree potential}
\begin{proposition}
\label{generate}
The operators,
$$
{\hat H}^{u}(t) =  
-\frac{\hbar^{2}}{2 m}\nabla^{2}+ V_{\rm ex}(\cdotp, t) +
W \ast |u(\cdotp, t)|^{2},
$$
with domain, $H^{2}(\Omega) \cap H^{1}_{0}(\Omega)$, satisfy the
hypotheses of Theorem \ref{theorem 3.2} for each 
$$
u \in C([0,T]; L^{2}(\Omega)).
$$
Here, $T$ is an arbitrary terminal time and the identifications, 
$$
X = L^{2}(\Omega), \; Y = 
H^{2}(\Omega) \cap H^{1}_{0}(\Omega), 
$$ 
are made. The external potential $V_{\rm ex}$ is assumed continuous from
the time interval into the space  
of twice continuously differentiable functions, 
with bounded derivatives through order two in ${\bf x}$.
In particular, the evolution operators $U^{u}(s,t)$ exist in the sense of
Theorem \ref{theorem 3.2} when the identification $A^{u}(t) =
(i/\hbar){\hat H}^{u}(t)$ is made.
\end{proposition}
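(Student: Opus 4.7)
The plan is to verify the three hypotheses of Theorem \ref{theorem 3.2} for $A^u(t) = (i/\hbar)\hat{H}^u(t)$ acting on $X = L^2(\Omega)$ with smooth space $Y = H^2(\Omega) \cap H^1_0(\Omega)$: stability of the family $\{A^u(t)\}$, continuity of the map $t \mapsto A^u(t) \in B[Y,X]$, and the isomorphism identity $S A^u(t) S^{-1} = A^u(t) + B^u(t)$ with $B^u(t) \in B[X]$ strongly measurable and having integrable norm in $t$.

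I would begin with self-adjointness of $\hat{H}^u(t)$ on $Y$. The Dirichlet Laplacian is self-adjoint with domain $Y$, the external potential $V_{\rm ex}(\cdotp,t)$ is bounded on $\bar\Omega$ by hypothesis, and the Hartree potential $W \ast |u|^2$ lies in $L^2(\Omega)$ by Young's inequality applied to $W \in L^2_{\mathrm{loc}}(\mathbb{R}^3)$ and $|u|^2 \in L^1(\Omega)$. Kato--Rellich then yields self-adjointness, and Stone's theorem makes $A^u(t)$ the generator of a unitary group on $X$, so stability holds with $M=1$, $\omega=0$. Continuity of $t \mapsto A^u(t)$ in $B[Y,X]$ is then handled term by term: the external-potential contribution from the regularity of $V_{\rm ex}$ in $C^2(\bar\Omega)$, and the Hartree contribution from an analogue of \eqref{Hartineq2} combined with $u \in C([0,T]; X)$.

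The principal step is the isomorphism identity. I would take $S = -\Delta + 1$ with Dirichlet boundary condition, an isomorphism $Y \to X$, and compute
\begin{equation*}
S A^u(t) S^{-1} - A^u(t) = \frac{i}{\hbar}\bigl( -\Delta V^u(t) - 2\,\nabla V^u(t)\cdot\nabla \bigr)\,S^{-1},
\end{equation*}
where $V^u(t) = V_{\rm ex}(\cdotp,t) + (W \ast |u|^2)(\cdotp,t)$. The external-potential part of $B^u(t)$ is bounded on $X$ immediately from the boundedness of $\nabla V_{\rm ex}$ and $\Delta V_{\rm ex}$ together with the mapping properties of $S^{-1}$ and $\nabla S^{-1}$. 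For the Hartree part, I would use the distributional identities $\Delta(W \ast |u|^2) = -4\pi |u|^2$ and $\nabla(W \ast |u|^2) = (\nabla W) \ast |u|^2$, Riesz-potential bounds for convolution against $|u|^2 \in L^1(\Omega)$, and the three-dimensional Sobolev embeddings $Y \hookrightarrow L^\infty(\Omega)$ and $H^1(\Omega) \hookrightarrow L^6(\Omega)$, paired through H\"older. Measurability and integrability of $t \mapsto \|B^u(t)\|$ inherit from the continuity already established.

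The hard part is balancing the Sobolev exponents so that the Hartree contribution genuinely defines a bounded operator on $L^2(\Omega)$ under only the mild hypothesis $u \in C([0,T]; L^2)$; dimension three is essential here, since the embedding $H^2 \hookrightarrow L^\infty$ is what compensates for the merely $L^1$ regularity of $|u|^2$ arising after one application of the Poisson identity. Once this commutator estimate is in place, Theorem \ref{theorem 3.2} delivers not only existence of the evolution operators $U^u(t,s)$ on $X$ but also the invariance $U^u(t,s)Y \subset Y$ exploited by Theorem \ref{global.existence}.
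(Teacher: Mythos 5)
Your proposal is correct and follows essentially the same route as the paper: verification of the stability and continuity hypotheses, then the similarity identity with $S$ the Dirichlet Laplacian (the paper uses $-\Delta$ rather than $-\Delta+1$, which changes nothing on a bounded domain), the Poisson identity $\Delta(W\ast|u|^{2})=-4\pi|u|^{2}$ producing the same three-term decomposition of $B(t)$, and the same Sobolev ($H^{2}\hookrightarrow L^{\infty}$, $H^{1}\hookrightarrow L^{6}$), H\"{o}lder, and Young estimates. The only cosmetic difference is that you invoke Kato--Rellich for self-adjointness where the paper uses the Friedrichs extension.
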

\begin{proof}
The proof proceeds by verifying hypotheses 1,3 of Theorem \ref{UE}
and the similarity hypothesis of Theorem \ref{theorem 3.2}.
It is equivalent to use ${\hat H}^{u}(t)$.
We use $X,Y$ for notational convenience in the proof.
We note the inequality, for each $t$,
\begin{equation}
\label{Hartineq}
\|(W \ast |u|^{2})g\|_{X} \leq \|W\|_{X} \||u|^{2} \|_{L^{1}}
\|g\|_{X},
\end{equation}
which follows from the Schwarz inequality and Young's inequality.
This implies that the Hartree potential defines a bounded linear operator
on $L^{2}$ for each $t$. The same is true for $V_{\rm ex}$. 
This permits the straightforward verification of the Assumption in 
section \ref{Ham} for the operators 
$\{{\hat H}^{u}(t)\}$.  In fact, one can employ the Friedrichs extension to the
symmetric operator defined on 
infinitely differentiable compact support functions. 
The above inequality and the assumed properties of $V_{\rm ex}$ 
can be used to verify the third hypothesis of
Theorem \ref{UE}. 
It remains to verify the similarity relation expressed in Theorem
\ref{theorem 3.2}. We define $S^{-1}$ here as the `solver' for the homogeneous
Dirichlet problem for the (negative) Laplacian $S$ on $\Omega$; 
the boundary is assumed
sufficiently smooth to allow for $H^{2}$ regularity for the solver when
applied to $L^{2}$ functions. 
By direct calculation
we have:
$$
S{\hat H}^{u}S^{-1} g = -\frac{{\hbar}^{2}}{2m} \nabla^{2}g + 
SV_{\rm ex}S^{-1}g + S (W \ast |u|^{2}) S^{-1} g, 
$$
for $g \in Y$.
It is necessary to demonstrate that the second and third operators are
bounded on $L^{2}$ for each $t$. For the third operator, one has
$$
B(t)g = 
S (W \ast |u|^{2}) S^{-1} g = 4 \pi |u|^{2} S^{-1}g - 2 \sum_{j = 1}^{3} 
(W_{x_{j}}\ast |u|^{2})(S^{-1}g)_{x_{j}} +    
(W\ast |u|^{2})g.    
$$
Note that we have used the fact that $W/(4 \pi)$ defines, by convolution, a
right inverse for $S$. We analyze each of the three terms. 
\begin{enumerate}
\item
For arbitrary $t$ and $g \in X$:
$$\||u|^{2} S^{-1}g\|_{X} \leq  
\||u|^{2}\|_{L^{1}}   
\|S^{-1}g\|_{L^{\infty}} \leq  
C \sup_{0\leq s \leq T}\|u(\cdotp, s)\|_{X}^{2} \|g\|_{X}.
$$
We have used Sobolev's inequality.
\item  
For arbitrary $t, j$ and $g \in X$:
$$
\|(W_{x_{j}}\ast |u|^{2})(S^{-1}g)_{x_{j}})\|_{X}    
\leq
\|(W_{x_{j}}\ast |u|^{2})\|_{L^{6/5}}    
\|(S^{-1}g)_{x_{j}}\|_{L^{6}}    
\leq
$$
$$
C\|W\|_{L^{6/5}}    
 \sup_{0 \leq s \leq T}\|u(\cdotp, s)\|_{X}^{2} \|g\|_{X}.
$$
We have used the H\"{o}lder, Young, and Sobolev inequalities,
as well as the standard computation of partial derivatives of $W$.
\item
For arbitrary $t$ and $g \in X$:
$$
\|(W \ast |u|^{2}) g\|_{X} \leq  
\|(W \ast |u|^{2})\|_{X} \|g\|_{X}   
\leq \|W\|_{X}
 \sup_{0 \leq s \leq T}\|u(\cdotp, s)\|_{X}^{2} \|g\|_{X}.
$$
We have used Young's inequality and the Schwarz inequality.
\end{enumerate}
This establishes that $B(t)$ is bounded on $X$ for each $t$.
The function space measurability and integrability are discussed
in detail in \cite[Prop.\ 7.1.4]{J1}.
This completes the verification for the final term. 
The verification for the second term is straightforward.
\end{proof}

\end{document}